\newcommand\version{June 19, 2017}
\newtheorem{theorem}{Theorem}
\newtheorem{proposition}[theorem]{Proposition}
\newtheorem{lemma}[theorem]{Lemma}
\newtheorem{corollary}[theorem]{Corollary}
\theoremstyle{definition}
\theoremstyle{remark}
\renewcommand{\epsilon}{\varepsilon}
\newcommand{\loc}{{\rm loc}}
\newcommand{\N}{\mathbb{N}}
\renewcommand{\phi}{\varphi}
\newcommand{\R}{\mathbb{R}}
\newcommand{\Sph}{\mathbb{S}}
\DeclareMathOperator{\diam}{diam}
\DeclareMathOperator{\dist}{dist}
\DeclareMathOperator{\supp}{supp}
\begin{document}

\title[A `liquid-solid' phase transition --- \version]{A `liquid-solid' phase transition in a simple\\ model for swarming, based on the `no flat-spots' theorem for subharmonic functions}

\author{Rupert L. Frank}
\address{Rupert L. Frank, Mathematisches Institut, Ludwig-Maximilans Universit\"at M\"un\-chen, Theresienstr. 39, 80333 M\"unchen, Germany, and Department of Mathematics, California Institute of Technology, Pasadena, CA 91125, USA}
\email{rlfrank@caltech.edu}

\author{Elliott H. Lieb}
\address{Elliott H. Lieb, Departments of Mathematics and Physics, Princeton
University, Princeton, NJ 08544, USA}
\email{lieb@princeton.edu}

\begin{abstract}
We consider a family of non-local shape optimization problems, which are motivated by a simple model for swarming and other self-assembly/aggregation models, and prove the existence of different phases for several of them. A technical key ingredient, which we establish, is that a strictly subharmonic function cannot be constant on a set of positive measure.
\end{abstract}

\maketitle

\renewcommand{\thefootnote}{${}$} \footnotetext{\copyright\, 2017 by the authors. This paper may be reproduced, in its entirety, for non-commercial purposes.\\
\emph{Date:} \version}

\section{Introduction and main result}

We are interested in the following minimization problem, depending on a parameter $\alpha>0$, which was recently introduced by Burchard, Choksi and Topaloglu in \cite{BCT}. For measurable functions $\rho\geq 0$ on
$\R^3$ one sets
$$
\mathcal E_\alpha[\rho] = \frac{1}{2}\iint_{\R^3\times\R^3} \rho(x) \left( \frac{1}{|x-y|} + |x-y|^\alpha \right) \rho(y)\,dx\,dy
$$
and, for $m>0$,
$$
E_\alpha(m) =\inf\left\{ \mathcal E_\alpha[\rho]:\ 0\leq \rho\leq 1\,,\ \int_{\R^3}\rho(x)\,dx = m \right\}.
$$

This is a simple model problem for flocking of birds or some
other  condensation phenomenon. The function $\rho$ describes the density of birds (or 'particles'). The energy functional $\mathcal
E_\alpha[\rho]$ has two terms. The first, $|x-y|^{-1}$, is a two-body
repulsive interaction between pairs of birds or particles. The second,
$|x-y|^\alpha $ is a two-body attractive interaction that engenders
condensation (or 'flocking').   The condition that $\rho \leq 1$,
introduced in \cite{BCT},  is a
many-body hard-core repulsion at short range. It imposes a maximum density,
beyond which the birds would be crushed. Its analogy in statistical
physics  is a bound on the allowed density of atoms in a liquid, namely the
density of the solid state. For background on mathematical models for biological aggregations we refer, for instance, to \cite{A,BT,FHK} and references therein.

We know from \cite{CFT} that for any $\alpha>0$ and $m>0$, the problem $E_\alpha(m)$ has a minimizer.

It is natural to think of the following three phases of the model, as described by the level set $\{\rho=1\}$ of a minimizer $\rho$ of the $E_\alpha(m)$ problem: 
\begin{align*}
& \text{Phase 1:} \qquad |\{\rho=1\}|=0  \,,\\
& \text{Phase 2:} \qquad 0<|\{\rho=1\}|<m \,, \\
& \text{Phase 3:} \qquad |\{\rho=1\}|=m \,.
\end{align*}
(Since we do not know whether minimizers are unique (modulo translations), it is possible to have mixtures of these phases.) We think of phase 1 as a `liquid phase', phase 3 as a `solid phase' and of phase 2 as an `intermediate phase'.

The following two theorems establish rigorously the existence of phases 1 and 3.

\begin{theorem}\label{main1}
For any $\alpha>0$ there is an $m_{c_1}(\alpha)>0$ such that for $m< m_{c_1}(\alpha)$ any minimizer $\rho$ for $E_\alpha(m)$ satisfies $|\{\rho=1\}|=0$.
\end{theorem}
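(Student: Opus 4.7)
The plan is to argue by contradiction, combining the Euler--Lagrange conditions with the ``no flat-spots'' theorem announced in the abstract.

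A trial density $\rho_0=m|B_{R_0}|^{-1}\chi_{B_{R_0}}$, with $R_0=R_0(\alpha)>0$ fixed so as to balance the Coulomb and attractive self-energies of a normalized ball, satisfies $\rho_0\le 1$ as soon as $m\le|B_{R_0}|$ and gives the preliminary estimate $E_\alpha(m)\le C(\alpha)\,m^2$. Let $\rho$ be any minimizer and let $V(x)=\int(|x-y|^{-1}+|x-y|^\alpha)\rho(y)\,dy$ denote its potential. Since $\rho$ may be assumed to be in $L^1\cap L^\infty$ with compact support (the latter by a standard truncation argument that exploits the attractive tail of the kernel), $V$ is continuous on $\R^3$ and its distributional Laplacian is the locally integrable function
$$\Delta V(x)=-4\pi\rho(x)+\alpha(\alpha+1)\int|x-y|^{\alpha-2}\rho(y)\,dy.$$
The usual bathtub/Euler--Lagrange analysis furnishes a constant $\mu$ such that $V=\mu$ a.e.\ on $\{0<\rho<1\}$, $V\le\mu$ a.e.\ on $A:=\{\rho=1\}$, and $V\ge\mu$ a.e.\ on $\{\rho=0\}$.

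Suppose, for contradiction, $|A|>0$. The crux is to promote ``$V\le\mu$ on $A$'' to ``$V=\mu$ on $A$'' and to identify an open set on which $V$ is constant. If $V(x_0)<\mu$ for some $x_0\in A$, continuity yields an open ball $N\ni x_0$ with $V<\mu$ on $N$; but then $N$ has null intersection with $\{0<\rho<1\}$ (where $V=\mu$) and with $\{\rho=0\}$ (where $V\ge\mu$), so $\rho\equiv 1$ a.e.\ on $N$ and $x_0$ belongs to the essential interior of $A$. Thus either $\mathrm{int}(A)\ne\emptyset$, in which case one sets $U=\mathrm{int}(A)$; or $V\equiv\mu$ pointwise on $A$, in which case a short density argument (using $V=\mu$ on the set $\{0<\rho<1\}$, which is adjacent to $A$ in a suitable sense, together with continuity of $V$) produces an open set $U$ on which $V\equiv\mu$ and $|A\cap U|>0$.

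On such $U$ the distributional Laplacian of $V$ vanishes, while on the positive-measure subset $A\cap U$ the Laplacian formula reads
$$0 = -4\pi + \alpha(\alpha+1)\int|x-y|^{\alpha-2}\rho(y)\,dy\qquad\text{a.e.\ on }A\cap U.$$
For $m$ small, the integral on the right is bounded uniformly in $x$ by $C(\alpha)\,m^{\beta}$ for some $\beta>0$, derived from $\int\rho=m$, $\rho\le 1$, and a priori support bounds on $\rho$; for $m$ below a threshold $m_{c_1}(\alpha)>0$ this bound is strictly less than $4\pi$, giving the desired contradiction. Equivalently, one may phrase the final step as an application of the ``no flat-spots'' theorem: the computation shows that $-V$ is strictly subharmonic on $U$ while being constant on the positive-measure set $A\cap U$, in direct violation of the theorem.

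The main obstacle is the structural claim in the third paragraph — producing the open set $U$ on which $V$ is constant — which is straightforward when $\mathrm{int}(A)\ne\emptyset$ but requires some topological care otherwise. A secondary technical point is the uniform-in-$x$ control of the convolution $\int|x-y|^{\alpha-2}\rho(y)\,dy$, which in turn relies on a priori bounds on the support of $\rho$ and on $\|\rho\|_\infty$ that remain valid as $m\to 0^+$.
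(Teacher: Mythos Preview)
Your overall strategy --- Euler--Lagrange conditions, the explicit Laplacian formula for $V$, and a small-$m$ bound on the confining convolution --- matches the paper's. The gap is precisely the structural step you flag in your third paragraph: your dichotomy does not produce what you need. In case 1, setting $U=\mathrm{int}(A)$ gives $\rho\equiv 1$ on $U$ and hence $-\Delta V>0$ there for small $m$, but you have no reason for $V$ to be \emph{constant} on any positive-measure subset of $U$ (you only know $V\le\mu$), so neither ``$\Delta V=0$ on $U$'' nor the no-flat-spots corollary applies. In case 2, the ``short density argument'' meant to pass from $V=\mu$ on the measurable set $A\cup\{0<\rho<1\}$ to an \emph{open} set with $V\equiv\mu$ is not justified --- level sets of continuous functions can have positive measure and empty interior. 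So both branches stall at the same point, and contrary to what you say, the case $\mathrm{int}(A)\neq\emptyset$ is not the straightforward one.

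The paper closes this gap with an energetic identity rather than a topological argument. For $m$ below the $L^\infty$-threshold of the unconstrained problem one has $E_\alpha(m)=m^2E_\alpha^*$, and the chemical-potential identification then gives $\mu=2E_\alpha(m)/m$, whence $\int(\phi-\mu)\rho\,dx=2E_\alpha(m)-\mu m=0$. Since $\phi\le\mu$ a.e.\ on $\{\rho>0\}$ by the Euler--Lagrange inequality, this forces $\phi=\mu$ a.e.\ on \emph{all} of $\{\rho>0\}$, including $A=\{\rho=1\}$. The no-flat-spots input is then applied in its $W^{2,1}_{\mathrm{loc}}$ form: $\Delta\phi=0$ a.e.\ on the level set $\{\phi=\mu\}$ --- no open set is needed --- and the contradiction $0=4\pi-\alpha(\alpha+1)\int|x-y|^{\alpha-2}\rho\,dy$ a.e.\ on $A$ follows for small $m$ as you indicate. (Your scheme can be repaired: a minimum-principle argument handles case 1 --- if $\min V<\mu$, the minimizing point lies in $\mathrm{int}(A)$ where $V$ is strictly superharmonic for small $m$, which is impossible --- reducing to case 2, where one should apply the Sobolev level-set result directly to the measurable set $A$ rather than seek an open $U$.)
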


\begin{theorem}\label{main}
For any $\alpha>0$ there is an $m_{c_2}(\alpha)<\infty$ such that for $m> m_{c_2}(\alpha)$ any minimizer $\rho$ for $E_\alpha(m)$ satisfies $|\{\rho=1\}|=m$.
\end{theorem}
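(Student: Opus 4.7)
The plan is to rule out any ``liquid'' region $L := \{0 < \rho < 1\}$ for large $m$, via the Euler--Lagrange conditions combined with the no-flat-spots principle. Set
$$V(x) := \int_{\R^3}\!\left(\frac{1}{|x-y|} + |x-y|^\alpha\right)\rho(y)\,dy.$$
The standard first-order variation with the constraints $0 \le \rho \le 1$ and $\int\rho = m$ produces a Lagrange multiplier $\mu$ such that $V = \mu$ a.e.\ on $L$, $V \le \mu$ a.e.\ on $\{\rho=1\}$, and $V \ge \mu$ a.e.\ on $\{\rho=0\}$. A preliminary step, needed for the case $0<\alpha<2$, will establish that, after a translation, $\supp \rho \subset B(0, Cm^{1/3})$ for a constant $C=C(\alpha)$ independent of $m$; this follows from the upper bound $\mathcal E_\alpha[\rho]\le \mathcal E_\alpha[\chi_{B_{R_0}}]=O(m^{(\alpha+6)/3})$ with $R_0=(3m/(4\pi))^{1/3}$, the ensuing estimate $\mu = O(m^{(\alpha+3)/3})$, and the lower bound $V(x)\gtrsim |x|^\alpha m$ for $|x|$ beyond the centering radius, combined with $V\le \mu$ on $\supp\rho$.

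Next I compute, using $-\Delta|x|^{-1}=4\pi\delta_0$ and $\Delta|x|^\alpha=\alpha(\alpha+1)|x|^{\alpha-2}$ in $\R^3$,
$$-\Delta V = 4\pi\rho - \alpha(\alpha+1)\,\bigl(|\cdot|^{\alpha-2} \ast \rho\bigr).$$
The right-hand side lies in $L^\infty_\loc$ since $\rho\in L^\infty$ has compact support and $|x|^{\alpha-2}\in L^1_\loc(\R^3)$ for $\alpha>-1$, so $V\in W^{2,p}_\loc$ for every finite $p$. Now assume for contradiction that $|L|>0$. Since $V=\mu$ on $L$ and a $W^{2,p}$-function satisfies $\Delta V = 0$ a.e.\ on any level set (the Stampacchia-type ``no flat-spots'' result announced in the abstract), the identity above yields on $L$ that
$$\rho(x) = \frac{\alpha(\alpha+1)}{4\pi}\int_{\R^3}|x-y|^{\alpha-2}\rho(y)\,dy \quad\text{a.e.}$$
Since $\rho<1$ on $L$, this forces $\int|x-y|^{\alpha-2}\rho(y)\,dy < 4\pi/(\alpha(\alpha+1))$ for a.e.\ $x\in L$.

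The contradiction for large $m$ now follows from a lower bound on this integral. When $\alpha\ge 2$, the kernel $s\mapsto s^{\alpha-2}$ is nondecreasing, and the bathtub principle (using only $\rho\le 1$ and $\int\rho = m$) gives, for any $x$,
$$\int|x-y|^{\alpha-2}\rho(y)\,dy \ge \int_{B(x,R_0)^c}|x-y|^{\alpha-2}\rho(y)\,dy \ge R_0^{\alpha-2}\cdot\tfrac{m}{2} \gtrsim m^{(\alpha+1)/3},$$
since by $\rho\le 1$ at most $m/2$ of the mass lies inside $B(x,(3m/(8\pi))^{1/3})$. When $0<\alpha<2$ the kernel is decreasing; using the support bound $\supp\rho\subset B(0,Cm^{1/3})$, any $x\in L\subset\supp\rho$ has $|x-y|\le 2Cm^{1/3}$ for every $y\in\supp\rho$, so
$$\int|x-y|^{\alpha-2}\rho(y)\,dy \ge (2Cm^{1/3})^{\alpha-2}\,m \gtrsim m^{(\alpha+1)/3}.$$
In both cases the integral exceeds $4\pi/(\alpha(\alpha+1))$ once $m>m_{c_2}(\alpha)$, contradicting the previous pointwise bound. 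Hence $|L|=0$, so $\rho\in\{0,1\}$ a.e.\ and $|\{\rho=1\}|=\int\rho=m$. The principal technical obstacle is the a priori support bound needed in the regime $0<\alpha<2$; once that and the no-flat-spots lemma are in hand, the chain of implications is direct.
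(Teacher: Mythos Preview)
Your proposal follows the same route as the paper: the Euler--Lagrange relation gives $V=\mu$ on $L=\{0<\rho<1\}$; Lemma~\ref{regularity} puts $V\in W^{2,p}_\loc$; Proposition~\ref{sobconst} (the no-flat-spots principle) forces $\Delta V=0$ a.e.\ on $L$; the explicit formula \eqref{eq:eeqlap} then pins $\rho$ on $L$ to the value $\frac{\alpha(\alpha+1)}{4\pi}\int|x-y|^{\alpha-2}\rho(y)\,dy$; and a lower bound of order $m^{(\alpha+1)/3}$ on this integral contradicts $\rho<1$ for large $m$. Your splitting of the lower bound into the cases $\alpha\ge 2$ (pure bathtub) and $0<\alpha<2$ (diameter bound) is exactly the content of Corollary~\ref{diamcor}.

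There is one genuine gap in your sketch of the support bound. The assertion ``the ensuing estimate $\mu=O(m^{(\alpha+3)/3})$'' does not follow from the energy upper bound alone. From $V\le\mu$ on $\supp\rho$ and $2E_\alpha(m)=\int V\rho$ you only get $\mu\ge 2E_\alpha(m)/m$, which is the wrong direction. What is actually needed is an \emph{upper} bound on $V$ (hence on $\mu$, via a boundary point of the support or a point of $L$) restricted to $\supp\rho$, and this is nontrivial: the paper obtains it by a separate variational comparison (Lemma~\ref{upperbound}, excising a small ball and rescaling), yielding $V(x)\le\frac{6+\alpha}{3}\,E_\alpha(m)/m$ at Lebesgue points of $\rho$ with $\rho(x)>0$. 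Combined with Lemma~\ref{ball} (a concentration estimate) this gives Theorem~\ref{diameter}. You correctly flag the support bound as ``the principal technical obstacle,'' but the mechanism you outline for it would not close; you should either invoke Theorem~\ref{diameter} or supply an argument along the lines of Lemma~\ref{upperbound}.
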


Our results do not establish the existence of an intermediate phase 2. In the exactly solvable case $\alpha=2$ \cite{BCT} it is shown that phase 2 does not occur, but we believe that this is an un-typical behavior. (This belief is supported by the candidates from \cite{FH} for minimizers for $E_\alpha(m)$ when $m$ is small and $\alpha\neq 2$, which are \emph{not} characteristic functions. Note that the computations in \cite{FH} do not impose the constraint $\rho\leq 1$. Therefore, by the arguments in the proof of Lemma \ref{energysmall} below, they are relevant for our problem for small $m$.)

Results similar to ours were obtained in \cite{BCT} for kernels of the form $|x|^{-p} + |x|^2$ with $1<p<3$ in the analogue of Theorem \ref{main1} and $0<p<3$ in the analogue of Theorem \ref{main}. The proofs of these results, however, rely heavily on the algebraic properties of $|x|^2$. It is conjectured in \cite{BCT} that these qualitative facts should be true for a larger class of interaction kernels and our results confirm this expectation in another class of kernels.

Our results are not restricted to Coulomb singularities $|x|^{-1}$. For example, we can extend Theorem \ref{main} to the case of interaction kernels of the form
$$
|x|^{-\beta} + |x|^\alpha
\qquad\text{with}\ 0<\beta<1 \ \text{and}\ \alpha>0 \,.
$$
We explain this in Subsection \ref{sec:extensions}. Neither for this extension nor for our main results do we need that the kernels have an exact power law behavior, but we prefer to stick to this model case in order to make the arguments as simple as possible.

We end this introduction with the discussion of a related shape optimization problem, which was the main focus of the work of Burchard, Choksi and Topaloglu in \cite{BCT}. For $m>0$ one sets
$$
I_\alpha(m) =\inf\left\{ \mathcal E_\alpha[\chi_\Omega]:\ \Omega\subset\R^3\,,\ |\Omega| = m \right\}.
$$
As a consequence of Theorems \ref{main1} and \ref{main} we obtain the existence of a phase transition with respect to the parameter $m$.

\begin{corollary}\label{main0}
For any $\alpha>0$ there are $0<\tilde m_{c_1}(\alpha)\leq \tilde m_{c_2}(\alpha)<\infty$ such that $I_\alpha(m)$ has a minimizer for $m> \tilde m_{c_2}(\alpha)$ and has no minimizer for $m< \tilde m_{c_1}(\alpha)$.
\end{corollary}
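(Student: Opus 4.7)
My plan is to show that $I_\alpha(m)=E_\alpha(m)$ for every $m>0$, after which the corollary follows mechanically from Theorems~\ref{main1} and~\ref{main}. The inequality $E_\alpha(m)\le I_\alpha(m)$ is immediate since every $\chi_\Omega$ with $|\Omega|=m$ is admissible in the $E_\alpha$ problem. For the reverse inequality I would take a minimizer $\rho^\ast$ of $E_\alpha(m)$ (which exists by \cite{CFT} and may be assumed compactly supported, say in some ball $B_R$, since the attractive term $|x-y|^\alpha$ rules out mass at infinity) and approximate it in energy by characteristic functions of mass exactly $m$: partition $B_R$ into a grid of cubes $\{Q_j\}$ of side $\delta$, set $m_j:=\int_{Q_j}\rho^\ast\in[0,\delta^3]$, choose sub-cubes $S_j\subset Q_j$ with $|S_j|=m_j$, and define $\Omega_\delta:=\bigcup_j S_j$. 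Then $|\Omega_\delta|=m$, and a cube-by-cube computation gives $\chi_{\Omega_\delta}\rightharpoonup\rho^\ast$ weakly-$\ast$ in $L^\infty$.

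To show $\mathcal E_\alpha[\chi_{\Omega_\delta}]\to\mathcal E_\alpha[\rho^\ast]$, I would split the kernel $K(z):=|z|^{-1}+|z|^\alpha$ as $K=K_1+K_2$ with $K_1:=K\,\chi_{\{|z|\le r\}}$. On $B_R\times B_R$, $K_2$ is bounded and continuous, so $K_2\ast\chi_{\Omega_\delta}\to K_2\ast\rho^\ast$ pointwise and boundedly; combined with $\chi_{\Omega_\delta}\rightharpoonup\rho^\ast$, dominated convergence yields convergence of the $K_2$-part of the energy. For the singular $K_1$-part, Fubini and the pointwise bound $\eta\le 1$ give, uniformly in $\delta$, for either $\eta=\chi_{\Omega_\delta}$ or $\eta=\rho^\ast$,
$$
\iint\eta(x)\,\eta(y)\,K_1(x-y)\,dx\,dy\le m\int_{|z|\le r}K(z)\,dz = C\,m\,(r^2+r^{\alpha+3}),
$$
using local integrability of $K$ in $\R^3$. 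Sending $\delta\to 0$ first and then $r\to 0$ produces $I_\alpha(m)\le E_\alpha(m)$.

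With the identity in hand, set $\tilde m_{c_2}(\alpha):=m_{c_2}(\alpha)$ and $\tilde m_{c_1}(\alpha):=m_{c_1}(\alpha)$. If $m>\tilde m_{c_2}(\alpha)$ and $\rho^\ast$ is any minimizer of $E_\alpha(m)$, then $|\{\rho^\ast=1\}|=m$ by Theorem~\ref{main}; combined with $0\le\rho^\ast\le 1$ and $\int\rho^\ast=m$, this forces $\rho^\ast=\chi_{\Omega^\ast}$ a.e.\ for $\Omega^\ast:=\{\rho^\ast=1\}$, and this characteristic function realizes $I_\alpha(m)=E_\alpha(m)$, hence is a minimizer of $I_\alpha(m)$. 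If $m<\tilde m_{c_1}(\alpha)$ and some $\chi_\Omega$ were a minimizer of $I_\alpha(m)$, then $\mathcal E_\alpha[\chi_\Omega]=I_\alpha(m)=E_\alpha(m)$ would make $\chi_\Omega$ a minimizer of $E_\alpha(m)$ with $|\{\chi_\Omega=1\}|=|\Omega|=m>0$, contradicting Theorem~\ref{main1}.

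The main obstacle is the energy-approximation step. The Coulomb singularity at the diagonal is handled by the $K_1/K_2$ split together with the uniform bound $\eta\le 1$ that both $\rho^\ast$ and $\chi_{\Omega_\delta}$ satisfy (the hard-core constraint is automatic for characteristic functions and crucial for the estimate above). The growth of $|z|^\alpha$ at infinity is tamed by the reduction to a compactly supported minimizer, which relies on standard confinement arguments for $E_\alpha(m)$.
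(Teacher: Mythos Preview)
Your proof is correct and follows the same route as the paper: establish the relaxation identity $E_\alpha(m)=I_\alpha(m)$ and then read off existence/non-existence of minimizers for $I_\alpha(m)$ from Theorems~\ref{main1} and~\ref{main}. The only difference is that the paper simply cites \cite{BCT} for the identity $E_\alpha(m)=I_\alpha(m)$ (and for the observation that $I_\alpha(m)$ has a minimizer iff some characteristic function minimizes $E_\alpha(m)$), whereas you supply a self-contained cube-approximation argument; your near-diagonal/far-diagonal split together with the uniform bound $\eta\le 1$ is a standard and valid way to do this (one cosmetic point: $K_2=K\chi_{\{|z|>r\}}$ is not continuous across $|z|=r$, but you only use that $K_2(x-\cdot)\in L^1(B_R)$, which holds since $K_2$ is bounded there).
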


It is natural to conjecture that $\tilde m_{c_1}(\alpha)= \tilde m_{c_2}(\alpha)$ and to wonder whether minimizers, whenever they exist, are spherically symmetric. These properties are true for $\alpha=2$, where the model is explicitly solvable \cite{BCT}. The following proof gives $m_{c_1}(\alpha)\leq \tilde m_{c_1}(\alpha)$ and $\tilde m_{c_2}(\alpha)\leq m_{c_2}(\alpha)$. (The strict inequality  $m_{c_1}(\alpha)<\tilde m_{c_1}(\alpha)$ occurs if for some $m\in ( m_{c_1}(\alpha), \tilde m_{c_1}(\alpha))$ all minimizers $\rho$ of the problem $E_\alpha(m)$ satisfy $|\{0<\rho<1\}|>0$. Similarly, the strict inequality $\tilde m_{c_2}(\alpha)< m_{c_2}(\alpha)$ would occur if for some $m\in (\tilde m_{c_2}(\alpha), m_{c_2}(\alpha))$ the problem $E_\alpha(m)$ had both a minimizer which is a characteristic function and one that is not.)

\begin{proof}[Proof of Corollary \ref{main0}]
We clearly have $E_\alpha(m)\leq I_\alpha(m)$. The important observation from \cite{BCT} is that, in fact,
$$
E_\alpha(m) = I_\alpha(m) \,.
$$
Moreover, the problem $I_\alpha(m)$ has a minimizer if and only if the characteristic function of a set is a minimizer for the relaxed problem $E_\alpha(m)$. Thus, according to Theorem \ref{main}, the $I_\alpha(m)$ problem has a minimizer for $m>m_{c_2}(\alpha)$ and, according to Theorem \ref{main1}, has no minimizer for $m<m_{c_1}(\alpha)$.
\end{proof}

\subsection*{Acknowledgements}

The authors are grateful to Luis Silvestre and to Mikhail Sodin for showing us how to prove Proposition \ref{shcont} for continuous and for $L^1$ functions, respectively, to Almut Burchard and Paul Gauthier for their careful reading and many helpful comments and to Haim Br\'ezis for references related to Proposition~\ref{sobconst}. Support by U.S. National Science Foundation grants DMS-1363432 (R.L.F.) and PHY-1265118 (E.H.L.) is acknowledged.


\section{Weak derivatives on sets of constancy}

The following result about functions in Sobolev spaces will play an important role in our proof.

\begin{proposition}\label{sobconst}
Let $\Omega\subset\R^d$ be an open set, $k\in\N$ and $u\in W^{k,1}_\loc(\Omega)$ real-valued. Then for any Borel set $A\subset\R$ of zero measure and any multi-index $\alpha\in\N_0^d$ with $0<|\alpha|\leq k$,
$$
\partial^\alpha u = 0
\qquad\text{almost everywhere on}\ u^{-1}(A) \,.
$$
\end{proposition}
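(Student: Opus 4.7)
The plan is to reduce the general statement to the first-derivative case by induction on $k$, so that the essential content is the base case.

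For the base case $k=1$, I would invoke the coarea formula for $W^{1,1}_\loc$ functions (Federer, in the form valid on $BV_\loc$, which contains $W^{1,1}_\loc$; see e.g.\ Evans--Gariepy, \emph{Measure Theory and Fine Properties of Functions}). Choosing the Borel set $E := u^{-1}(A)\subset\Omega$, the formula gives
\[
\int_{u^{-1}(A)} |\nabla u|\,dx \;=\; \int_\R \mathcal H^{d-1}\!\bigl(u^{-1}(t)\cap u^{-1}(A)\bigr)\,dt \;=\; \int_A \mathcal H^{d-1}\!\bigl(u^{-1}(t)\bigr)\,dt,
\]
because the slice $u^{-1}(t)\cap u^{-1}(A)$ is $u^{-1}(t)$ when $t\in A$ and is empty otherwise. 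Coarea also forces $\mathcal H^{d-1}(u^{-1}(t))<\infty$ for almost every $t$, and since $|A|=0$ the last integral vanishes. Hence $|\nabla u|=0$ a.e.\ on $u^{-1}(A)$, which handles every multi-index of length one.

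For the inductive step, suppose the proposition is known for $W^{k-1,1}_\loc$ functions. Let $u\in W^{k,1}_\loc(\Omega)$ with $|\alpha|=k$, and write $\alpha=\beta+e_j$ for some $|\beta|=k-1$. Since $u\in W^{k-1,1}_\loc$, the inductive hypothesis applied to $u$ gives $\partial^\beta u=0$ a.e.\ on $u^{-1}(A)$. Set $v:=\partial^\beta u$; by assumption $v\in W^{1,1}_\loc(\Omega)$. The singleton $\{0\}\subset\R$ has measure zero, so the base case applied to $v$ yields $\nabla v=0$ a.e.\ on $v^{-1}(\{0\})$. In particular $\partial^\alpha u=\partial_j v=0$ a.e.\ on $\{v=0\}$, and this set contains $u^{-1}(A)$ up to a null set. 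Multi-indices of length strictly less than $k$ are handled directly by the inductive hypothesis applied to $u$, so the induction closes.

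The main obstacle I anticipate is citing a version of the coarea formula that is valid in $W^{1,1}$ (not merely for Lipschitz or $C^1$ functions); this is where I would lean on the $BV$ version from Evans--Gariepy. If one wished to avoid coarea entirely, the $k=1$ case can be obtained more elementarily: the identity $u=c+(u-c)_+-(u-c)_-$ in $W^{1,1}_\loc$, combined with the standard chain rule for the positive/negative part, shows $\nabla u=0$ a.e.\ on $\{u=c\}$ for each $c\in\R$, which already settles the case when $A$ is countable; a general Borel null set is then covered by open sets of arbitrarily small measure and a truncation/chain-rule limit argument. The coarea route is the shortest and cleanest, so I would take that path.
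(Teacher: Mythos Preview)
Your proof is correct and follows essentially the same inductive strategy as the paper's: write $\partial^\alpha=\partial_j\partial^\beta$, use the inductive hypothesis to get $v:=\partial^\beta u=0$ a.e.\ on $u^{-1}(A)$, and then apply the base case to $v$ with the null set $\{0\}$. The only difference is that the paper simply cites the $k=1$ case as a classical result (Serrin--Varberg, Marcus--Mizel, Almgren--Lieb; see also Lieb--Loss, Thm.~6.19), whereas you supply a direct coarea-formula argument for it.
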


\begin{proof}
For $k=1$ this is a classical result \cite{SV,MM,AL} (see also \cite[Thm. 6.19]{LL} for a textbook proof) and we now show that this implies the general result by a simple induction argument. Thus, let $k\geq 2$ and $0<|\alpha|\leq k$ and write $\partial^\alpha = \partial_j\partial^\beta$ for some $1\leq j\leq d$ and some multi-index $\beta$ with $0\leq|\beta|\leq k-1$. By induction, we have $v:=\partial^\beta u=0$ almost everywhere on $u^{-1}(A)$, so $v^{-1}(\{0\})\supset u^{-1}(A)$ (up to sets of measure zero). Moreover, since $v\in W^{1,1}_\loc(\Omega)$, we have again by the Almgren--Lieb result $\partial_j v=0$ almost everywhere on $v^{-1}(\{0\})$. In particular, $\partial_j v=0$ almost everywhere on $u^{-1}(A)$, which proves the assertion.
\end{proof}

From this proposition we deduce, in particular, that $\Delta u=0$ almost everywhere on $\{u=\tau\}$ which leads immediately to the following interesting corollary about strictly subharmonic functions.

\begin{corollary}[Strictly subharmonic functions have no flat spots]\label{sh}
Let $\Omega\subset\R^d$, $d\geq 2$, be an open set and assume that $u\in W^{2,1}_\loc(\Omega)$ satisfies $-\Delta u\leq -\epsilon$ in $\Omega$ for some $\epsilon>0$. Then $|\{ x\in\Omega: u(x) =\tau\}|=0$ for any $\tau\in\R$. 
\end{corollary}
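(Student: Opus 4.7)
The plan is to apply Proposition \ref{sobconst} directly, with $k=2$ and with the singleton $A=\{\tau\}$, which is a Borel set of zero one-dimensional Lebesgue measure. By that proposition, for every multi-index $\alpha\in\N_0^d$ with $0<|\alpha|\le 2$ we have $\partial^\alpha u = 0$ almost everywhere on $u^{-1}(\{\tau\})=\{x\in\Omega : u(x)=\tau\}$. In particular this applies to each of the pure second derivatives $\partial_j^2 u$, $j=1,\dots,d$.

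Summing over $j$ I obtain that $\Delta u =\sum_{j=1}^d \partial_j^2 u = 0$ almost everywhere on the level set $\{u=\tau\}$. Here I use the fact that since $u\in W^{2,1}_\loc(\Omega)$, the distributional Laplacian $\Delta u$ is represented by an $L^1_\loc$ function, so the pointwise (a.e.) identity makes sense.

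On the other hand, the standing hypothesis $-\Delta u\le -\epsilon$ in $\Omega$, combined with the fact that $\Delta u$ is an honest $L^1_\loc$ function, gives $\Delta u\ge \epsilon>0$ almost everywhere in $\Omega$. Thus on the set $\{u=\tau\}$ the function $\Delta u$ must simultaneously vanish a.e.\ and be bounded below by $\epsilon>0$ a.e., which forces $|\{u=\tau\}|=0$.

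I do not foresee a genuine obstacle here: the entire analytic content sits inside Proposition \ref{sobconst}, and the corollary is essentially a one-line consequence once one recognizes that the hypothesis $u\in W^{2,1}_\loc$ makes $\Delta u$ a pointwise-defined function (a.e.) rather than merely a distribution. The only small point to verify is that I am allowed to reinterpret $-\Delta u\le -\epsilon$ as an almost-everywhere inequality, which is immediate from $W^{2,1}_\loc$ regularity.
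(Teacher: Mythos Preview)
Your argument is correct and is exactly the approach taken in the paper: apply Proposition~\ref{sobconst} with $A=\{\tau\}$ to get $\Delta u=0$ a.e.\ on $\{u=\tau\}$, and then note that this contradicts the a.e.\ inequality $\Delta u\ge\epsilon>0$ unless the level set has measure zero. The paper states this as an immediate consequence in one sentence, and your write-up simply makes the steps explicit.
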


The need of some strict subharmonicity assumption to deduce the absence of flat spots can be seen from the example $u(x) = (x_1)_+$, which is subharmonic and constant on the half-space $\{x_1\leq 0\}$.

The conclusion of the corollary remains valid if the assumption $u\in W^{2,1}_\loc(\Omega)$ is replaced by continuity and the equation $-\Delta u\leq -\epsilon$ is understood in viscosity sense. In fact, it remains valid for $u\in L^1_{\loc}(\Omega)$ under the assumption that Lebesgue measure is absolutely continuous with respect to the measure $\Delta u$. We will not need these results but, since they might be of independent interest, we present their proofs in two appendices.


\section{The Euler--Lagrange equation and the `chemical potential'}

In order to emphasize the general nature of the arguments in this section, we consider more general interaction kernels $k$ which are
\begin{equation}
\label{eq:kernelgen}
\begin{split}
& \text{locally integrable, non-negative, lower semi-continuous,}\\
& \text{and satisfy}\ \lim_{|x|\to\infty} k(x)=\infty \,.
\end{split}
\end{equation}
We set
\begin{equation}
\label{eq:energyfcnlgen}
\mathcal E[\rho] = \frac{1}{2}\iint_{\R^3\times\R^3} \rho(x) k(x-y) \rho(y)\,dx\,dy
\end{equation}
and
\begin{equation}
\label{eq:energygen}
E(m) =\inf\left\{ \mathcal E[\rho]:\ 0\leq \rho\leq 1\,,\ \int_{\R^3}\rho(x)\,dx = m \right\}.
\end{equation}
Existence of minimizers has been proved under the above conditions on $k$ in \cite{CCP,SST} for the problem without the $L^\infty$ constraint. The case of the $L^\infty$ constraint is, in fact, simpler. Moreover, the assumption of spherical symmetry of $k$ in \cite{SST} is not necessary.

Let $\rho$ be a minimizer for $E(m)$ and let
\begin{equation}
\label{eq:potgen}
\phi(x) = \int_{\R^3} k(x-y) \rho(y)\,dy
\end{equation}
be its potential. In \cite{BCT} it is shown that there is a constant $\mu>0$ such that for almost every $x\in\R^3$
\begin{align}\label{eq:eeq}
\phi(x) \leq\mu & \qquad \text{if}\ \rho(x)=1 \,,\notag\\
\phi(x) = \mu & \qquad\text{if}\ 0<\rho(x)<1 \,,\\
\phi(x) \geq \mu & \qquad\text{if}\ \rho(x)=0 \,. \notag
\end{align}

We now identify $\mu$ with the `chemical potential' (that is, the derivative of $E$ with respect to $m$) of the minimization problem.

\begin{lemma}\label{chempot}
Let $\rho$ be a minimizer of $E(m)$ for some $m>0$ and let $\mu$ be as in \eqref{eq:eeq}. Then
$$
\limsup_{m'\downarrow m} \frac{E(m')-E(m)}{m'-m} \leq \mu \leq \liminf_{m'\uparrow m} \frac{E(m')-E(m)}{m'-m} \,.
$$
\end{lemma}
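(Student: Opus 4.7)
The plan is to establish both one-sided bounds by constructing, for each $m'$ near $m$, a feasible test density $\rho_{m'}$ of mass $m'$ that differs from $\rho$ only on a small set where the potential $\phi$ is close to $\mu$; the variational bound $E(m')\le\mathcal E[\rho_{m'}]$ then supplies the inequalities after scaling the modification to zero.

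Concretely, fix $\epsilon>0$ and choose a bounded Borel set $S^+\subset\R^3$ of positive measure on which $\rho\le 1-\delta$ for some $\delta>0$ and $\phi\le\mu+\epsilon$ almost everywhere. For $m'>m$ with $(m'-m)/|S^+|<\delta$, set $\rho^+_{m'}:=\rho+\frac{m'-m}{|S^+|}\chi_{S^+}$; this is feasible, has mass $m'$, and expanding the quadratic functional gives
\begin{align*}
\mathcal E[\rho^+_{m'}]-\mathcal E[\rho]
&=\frac{m'-m}{|S^+|}\int_{S^+}\phi(x)\,dx+\frac{(m'-m)^2}{2|S^+|^2}\iint_{S^+\times S^+}k(x-y)\,dx\,dy\\
&\le (\mu+\epsilon)(m'-m)+C(m'-m)^2,
\end{align*}
the constant $C$ being finite because $S^+$ is bounded and $k$ is locally integrable by \eqref{eq:kernelgen}. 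Dividing by $m'-m>0$, sending $m'\downarrow m$ and then $\epsilon\to 0$ yields the first inequality. The second inequality is obtained by the mirror construction on a set $S^-$ on which $\rho\ge\delta$ and $\phi\ge\mu-\epsilon$, removing mass instead of adding it.

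What remains is to exhibit the sets $S^\pm$. If $|\{0<\rho<1\}|>0$, this is immediate: by \eqref{eq:eeq}, $\phi=\mu$ almost everywhere on $\{0<\rho<1\}$, and any bounded subset of $\{\delta\le\rho\le 1-\delta\}$ of positive measure serves both purposes for small enough $\delta$. The delicate case, and the main obstacle, is when $\rho$ is essentially the characteristic function of some set $\Omega$. Then \eqref{eq:eeq} only gives $\phi\le\mu$ a.e.\ on $\Omega$ and $\phi\ge\mu$ a.e.\ on $\Omega^c$, and one has to show further that $\mu$ in fact coincides with the essential supremum of $\phi$ on $\Omega$ and with the essential infimum of $\phi$ on $\Omega^c$ --- so that $\{\phi\le\mu+\epsilon\}\cap\Omega^c$ and $\{\phi\ge\mu-\epsilon\}\cap\Omega$ both have positive measure for every $\epsilon>0$. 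Intersecting these with a large ball then produces the desired $S^\pm$.
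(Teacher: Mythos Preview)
Your outline is the same strategy as the paper's --- perturb $\rho$ by a multiple of $\chi_S$ on a set where $\phi$ is $\epsilon$-close to $\mu$ --- but the proof is left incomplete at exactly the point you yourself label ``the main obstacle.'' In the characteristic-function case you assert that one must show $\mu=\mathrm{ess\,sup}_\Omega\phi=\mathrm{ess\,inf}_{\Omega^c}\phi$, yet you do not prove it; the proposal ends with this claim stated rather than established.

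The paper closes this gap by invoking two facts from \cite{BCT}: $\phi$ is continuous and $\phi(x)\to\infty$ as $|x|\to\infty$. For the upper bound it simply takes $F_\epsilon=\{\mu<\phi\le\mu+\epsilon\}$; this set is open, nonempty by the intermediate value theorem (since $\phi\le\mu$ on the support of $\rho$ and $\phi\to\infty$ at infinity), hence of positive measure, bounded, and contained in $\{\rho=0\}$ by \eqref{eq:eeq}. For the lower bound the paper distinguishes two subcases: if $\inf\phi<\mu$, the same continuity argument shows that the open set $\{\mu-\epsilon\le\phi<\mu\}$ is nonempty, and it lies in $\{\rho=1\}$; if $\inf\phi=\mu$, the level-set construction may be vacuous, and the paper instead uses the scaling trial $\tilde\rho=(m'/m)\rho$ together with $\phi\ge\mu$ on $\{\rho>0\}$ to obtain $E(m')\le(m'/m)^2E(m)\le E(m)-\mu(m-m')\,(m+m')/(2m)$, which suffices. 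Your asserted equalities are in fact correct and follow from exactly these continuity and blow-up properties (so your case split on $|\{0<\rho<1\}|$ is not really needed), but as written the argument stops short of supplying them.
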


Note that this implies, in particular, that $E$ is a continuous function of $m$ and that the singular part of its distributional derivative is non-positive.

\begin{proof}
Let $\epsilon>0$. Since the potential $\phi$ is continuous (see, for instance, the remark before \cite[Thm. 4.4]{BCT}) the set $F_\epsilon=\{ \mu < \phi \leq \mu+\epsilon\}$ has positive measure. Since $\lim_{|x|\to\infty} \phi(x)=\infty$ (see, for instance, the proof of \cite[Lem. 4.3]{BCT}), the set $F_\epsilon$ is bounded. Note that according to \eqref{eq:eeq}, $\rho=0$ on $F_\epsilon$. For any $m<m'\leq m + |F_\epsilon|$ we consider $\tilde\rho:=\rho + ((m'-m)/|F_\epsilon|) \chi_{F_\epsilon}$ which clearly satisfies the constraints of the $E(m')$ problem. Moreover,
\begin{align*}
E(m') & \leq \mathcal E[\tilde\rho] \\
& = \mathcal E[\rho] + \frac{m'-m}{|F_\epsilon|} \iint_{\R^3\times\R^3} \chi_{F_\epsilon}(x)k(x-y) \rho(y)\,dx\,dy + \left( \frac{m'-m}{|F_\epsilon|} \right)^2 \mathcal E[\chi_{F_\epsilon}] \\
& = E(m) + \frac{m'-m}{|F_\epsilon|} \int_{F_\epsilon} \phi(x)\,dx + \left( \frac{m'-m}{|F_\epsilon|} \right)^2 \mathcal E[\chi_{F_\epsilon}] \\
& \leq E(m) + (\mu+\epsilon) (m'-m) + \left( \frac{m'-m}{|F_\epsilon|} \right)^2 \mathcal E[\chi_{F_\epsilon}] \,.
\end{align*}
Letting $m'\downarrow m$ we find
$$
\limsup_{m'\downarrow m} \frac{E(m')-E(m)}{m'-m} \leq \mu+\epsilon \,,
$$
and, since $\epsilon>0$ is arbitrary, we obtain the left inequality in the lemma.

To prove the reverse inequality we distinguish two cases. First, assume that $\inf\phi<\mu$. Then, for fixed $\epsilon>0$, we choose $F_\epsilon:=\{\mu-\epsilon\leq\phi<\mu\}$, which has positive measure and is bounded. Moreover, by \eqref{eq:eeq}, $\rho=1$ on $F_\epsilon$. For any $m>m'>m-|F_\epsilon|$ we consider $\tilde\rho = \rho - ((m-m')/|F_\epsilon|)\chi_{F_\epsilon}$ and bound, similarly as before
\begin{align*}
E(m') & \leq \mathcal E[\tilde\rho] \\
& = E(m) - \frac{m-m'}{|F_\epsilon|} \int_{F_\epsilon} \phi(x)\,dx + \left( \frac{m-m'}{|F_\epsilon|} \right)^2 \mathcal E[\chi_{F_\epsilon}] \\
& \leq E(m) - (\mu-\epsilon) (m-m') + \left( \frac{m-m'}{|F_\epsilon|} \right)^2 \mathcal E[\chi_{F_\epsilon}] \,.
\end{align*}
This implies (note $m'-m<0$)
$$
\liminf_{m'\uparrow m} \frac{E(m')-E(m)}{m'-m} \geq \mu-\epsilon \,,
$$
which proves the right inequality in the lemma.

Now assume that $\inf\phi=\mu$. In this case we simply choose $\tilde\rho =(m'/m)\rho$ for any $m'<m$ and obtain
\begin{align*}
E(m') & \leq \mathcal E[\tilde\rho] \\
& \leq \left(\frac{m'}{m}\right)^2 E(m) \\
& = E(m) - \frac{m^2-(m')^2}{2m^2} \int_{\R^d} \rho\phi\,dx  \\
& \leq E(m) - \frac{m^2-(m')^2}{2m} \ \mu \,.
\end{align*}
That is,
$$
\frac{E(m')-E(m)}{m'-m} \geq \frac{m+m'}{2m} \ \mu \,,
$$
which again implies the right inequality in the lemma.
\end{proof}


\section{Diameter bound}\label{sec:diameter}

It is known \cite{BCT} that, even for the general interaction kernels of the previous section, the support of minimizers is bounded. An important ingredient in the proof of Theorems \ref{main1} and \ref{main} is a quantitative version of this result which controls the size of the support in terms of $m$. More precisely, we show that the diameter of the support of $\rho$ grows at most like $m^{1/3}$ for large $m$. We emphasize that, while the results in this subsection can be extended to more general interaction kernels, for the sake of simplicity we restrict ourselves to the case $k(x)=|x|^{-1} + |x|^\alpha$.

\begin{theorem}\label{diameter}
For any $\alpha>0$ there is a constant $C$ such that for any $m>0$ and any minimizer $\rho$ of $E_\alpha(m)$ one has
$$
\diam\supp\rho \leq C \max\{1, m^{1/3}\} \,.
$$
\end{theorem}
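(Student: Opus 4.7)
The plan is to sandwich the chemical potential $\mu$ from the Euler--Lagrange equation \eqref{eq:eeq} between two estimates involving $D := \diam \supp \rho$ and $m$, and then compare.

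First I would establish a lower bound $\mu \geq c\, m D^\alpha$ via an antipodal argument. Since the support of $\rho$ is compact by \cite{BCT}, I pick $x_0, x_1 \in \supp \rho$ with $|x_0 - x_1| = D$. The potential $\phi$ is continuous (cf.\ the remark preceding \cite[Thm.~4.4]{BCT}), so the a.e.\ inequality $\phi \leq \mu$ on $\{\rho>0\}$ from \eqref{eq:eeq} upgrades to $\phi(x) \leq \mu$ pointwise on $\supp \rho$, and in particular $\phi(x_0), \phi(x_1) \leq \mu$. Dropping the nonnegative Coulomb kernel and noting that $\max(|x_0 - y|, |x_1 - y|) \geq D/2$ for every $y\in\R^3$ by the triangle inequality yields
$$
2\mu \geq \phi(x_0) + \phi(x_1) \geq \int_{\R^3} \bigl( |x_0 - y|^\alpha + |x_1 - y|^\alpha \bigr) \rho(y)\,dy \geq m \left( \tfrac{D}{2} \right)^\alpha,
$$
hence $\mu \geq m D^\alpha / 2^{1+\alpha}$.

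Next I would produce a matching upper bound $\mu \leq C(1 + m^{1+\alpha/3})$. The ball trial $\chi_{B_r}$ with $|B_r| = m$ gives $E_\alpha(m) \leq C_1 m^{5/3} + C_2 m^{2+\alpha/3}$ by a direct self-energy computation. The scaling trial $\tilde \rho = (m'/m) \rho$ used in the proof of Lemma \ref{chempot} shows that $E_\alpha(m')/(m')^2 \leq E_\alpha(m)/m^2$ for $0\leq m' \leq m$, so the map $m\mapsto E_\alpha(m)/m^2$ is nondecreasing. Applying the one-sided derivative inequality $\limsup_{m' \downarrow m}(E_\alpha(m') - E_\alpha(m))/(m' - m) \leq \mu$ of Lemma \ref{chempot} together with the secant between $m$ and $2m$ (whose value is controlled by the ball trial at mass $2m$) I would extract the claimed upper bound on $\mu$. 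Combining with the lower bound gives $m D^\alpha \leq C(1 + m^{1+\alpha/3})$, which rearranges to $D \leq C \max\{1, m^{1/3}\}$; the constant term is handled via the uniform boundedness of support of minimizing families for $m$ in a compact range.

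The hardest part will be the upper bound on $\mu$: Lemma \ref{chempot} gives only one-sided derivative inequalities, not a uniform Lipschitz estimate, so extracting a clean numerical bound in terms of $m$ requires carefully coupling $E_\alpha(2m)$ to $E_\alpha(m)$ through the derivative inequality and the monotonicity of $E_\alpha/m^2$. A potential alternative is a direct variational argument: if $D \gg m^{1/3}$, then translating one end of $\supp \rho$ toward the bulk would decrease the attractive part of $\mathcal E_\alpha[\rho]$ more than it increases the repulsive Coulomb part, contradicting minimality. The antipodal argument in the first step is essentially the Euler--Lagrange shadow of this physical picture, and all the $m$-dependence in Theorem~\ref{diameter} is driven by the attractive $|x|^\alpha$ term.
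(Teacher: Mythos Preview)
Your antipodal lower bound $\mu \geq m(D/2)^\alpha/2$ is correct and pleasantly short; it replaces the paper's concentration step (Lemma~\ref{ball}) by a one-line observation. The justification that $\phi\leq\mu$ pointwise on $\supp\rho$ is fine: since $\{\phi>\mu\}$ is open and $\rho=0$ a.e.\ there by \eqref{eq:eeq}, it is disjoint from $\supp\rho$.

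The gap is in the upper bound on $\mu$. The inequality you quote from Lemma~\ref{chempot},
\[
\limsup_{m'\downarrow m}\frac{E_\alpha(m')-E_\alpha(m)}{m'-m}\leq \mu,
\]
is a \emph{lower} bound on $\mu$; combining it with a secant estimate can never produce an upper bound. The correct side of Lemma~\ref{chempot} for an upper bound is $\mu\leq \liminf_{m'\uparrow m}(E_\alpha(m')-E_\alpha(m))/(m'-m)$, the left derivative. But your monotonicity of $E_\alpha(m)/m^2$ (which is correct) only tells you $E_\alpha(m')\leq (m'/m)^2 E_\alpha(m)$ for $m'<m$, hence
\[
\frac{E_\alpha(m)-E_\alpha(m')}{m-m'}\geq \frac{(m+m')E_\alpha(m)}{m^2},
\]
again a lower bound on the difference quotient. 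There is no convexity of $E_\alpha$ available (for small $m$ it is quadratic, for large $m$ superquadratic), so secants to the right do not dominate the left derivative. In short, none of the ingredients you assemble can bound $\mu$ from above.

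What the paper actually uses for this is Lemma~\ref{upperbound}: a mass-preserving dilation comparison (cutting out a small ball around $x$ and rescaling the rest) which yields $\phi(x)\leq \tfrac{6+\alpha}{3}\,E_\alpha(m)/m$ at Lebesgue points with $\rho(x)>0$, and by continuity on all of $\supp\rho$; in particular $\mu\leq \tfrac{6+\alpha}{3}\,E_\alpha(m)/m$. This scaling argument is the substantive step your proposal is missing. Once you have it, your antipodal inequality combined with the trial bound $E_\alpha(m)/m^2\lesssim \max\{1,m^{\alpha/3}\}$ does finish the proof, and in fact gives a slightly more streamlined route than the paper's combination of Lemmas~\ref{ball} and~\ref{upperbound}.
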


The proof will rely on two auxiliary lemmas which we state and prove next. With any $\rho$ we associate its potential
\begin{equation}
\label{eq:pot}
\phi(x) = \int_{\R^3} \left( \frac{1}{|x-y|} + |x-y|^\alpha\right) \rho(y)\,dy \,.
\end{equation}
The first lemma does not require $\rho$ to be a minimizer. In fact, this lemma can be used to prove the existence of a minimizer.

\begin{lemma}\label{ball}
For any $\rho\geq 0$ with $\int_{\R^3}\rho(y)\,dy=m$ we have
$$
\sup_{a\in\R^3} \int_{B_R(a)} \rho(y)\,dy \geq m - \frac{2\,\mathcal E_\alpha[\rho]}{m R^\alpha}
$$
\end{lemma}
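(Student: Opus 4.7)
The plan is to exploit the attractive part $|x-y|^\alpha$ of the kernel to bound mass that is spread out far apart, and then rewrite the resulting integral as a convolution that is controlled by $m$ times the supremum of local mass.

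First, since $|x-y|^{-1}\geq 0$, we can discard the Coulomb term and obtain the lower bound
\[
2\,\mathcal E_\alpha[\rho] \;\geq\; \iint_{\R^3\times\R^3} \rho(x)\,|x-y|^\alpha\,\rho(y)\,dx\,dy
\;\geq\; R^\alpha \iint_{|x-y|>R} \rho(x)\rho(y)\,dx\,dy.
\]
Rearranging gives an upper bound on the ``long-range'' double integral,
\[
\iint_{|x-y|>R} \rho(x)\rho(y)\,dx\,dy \;\leq\; \frac{2\,\mathcal E_\alpha[\rho]}{R^\alpha},
\]
and subtracting from the trivial identity $\iint \rho(x)\rho(y)\,dx\,dy = m^2$ produces a lower bound on the ``short-range'' mass:
\[
\iint_{|x-y|\leq R} \rho(x)\rho(y)\,dx\,dy \;\geq\; m^2 - \frac{2\,\mathcal E_\alpha[\rho]}{R^\alpha}.
\]

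Next, I would rewrite the short-range double integral as $\int_{\R^3}\rho(x)\bigl(\int_{B_R(x)}\rho(y)\,dy\bigr)dx$ and estimate the inner factor by its supremum over the center:
\[
\iint_{|x-y|\leq R} \rho(x)\rho(y)\,dx\,dy
\;\leq\; \Bigl(\sup_{a\in\R^3} \int_{B_R(a)}\rho(y)\,dy\Bigr)\int_{\R^3}\rho(x)\,dx
\;=\; m\,\sup_{a\in\R^3} \int_{B_R(a)}\rho(y)\,dy.
\]
Combining the two displays and dividing by $m>0$ yields exactly the claimed inequality. There is no real obstacle here; the step that does the work is simply the Chebyshev-type estimate $|x-y|^\alpha\geq R^\alpha$ on $\{|x-y|>R\}$, which converts an energy bound into a mass-concentration statement, together with the Fubini/sup trick in the last step.
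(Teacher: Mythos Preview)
Your proof is correct and is essentially the same argument as the paper's: both drop the Coulomb part, use the Chebyshev-type bound $|x-y|^\alpha\geq R^\alpha$ outside $B_R$, and integrate against $\rho$. The only cosmetic difference is that the paper phrases the first step as a pointwise lower bound on the potential $\phi(x)\geq R^\alpha\bigl(m-\sup_a\int_{B_R(a)}\rho\bigr)$ and then integrates, whereas you work directly with the double integral and apply the supremum after Fubini; the two orderings yield the identical inequality.
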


\begin{proof}
We bound
\begin{align*}
\phi(x) & \geq \int_{\R^3\setminus B_R(x)} |x-y|^\alpha \rho(y)\,dy \\
& \geq R^\alpha \left( m - \int_{B_R(x)} \rho(y)\,dy \right) \\
& \geq R^\alpha \left(m- \sup_{a\in\R^3} \int_{B_R(a)} \rho(y)\,dy \right)
\end{align*}
and obtain
$$
\mathcal E_\alpha[\rho] = \frac12 \int_{\R^3} \rho(x)\phi(x)\,dx \geq \frac{m}{2} R^\alpha \left(m- \sup_{a\in\R^3} \int_{B_R(a)} \rho(y)\,dy \right),
$$
which is the claimed inequality.
\end{proof}

The second lemma provides an upper bound on the potential on the support of $\rho$. The method of proof is reminiscent of some arguments in geometric measure theory, see, for instance, \cite[Lem.~4]{LO}.

\begin{lemma}\label{upperbound}
Let $\rho$ be a minimizer for $E_\alpha(m)$ for some $m>0$ and let $x\in\R^3$ be a Lebesgue point of $\rho$ with $\rho(x)>0$. Then
$$
\phi(x) \leq \frac{6+\alpha}{3} \ \frac{E_\alpha(m)}{m} \,.
$$
\end{lemma}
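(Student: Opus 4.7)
The plan is to combine the Euler--Lagrange equation \eqref{eq:eeq}, which forces $\phi\le \mu$ on the set where $\rho>0$, with a simple scaling/virial computation that bounds the chemical potential $\mu$ in terms of $E_\alpha(m)/m$. By \eqref{eq:eeq}, $\phi(x)\le\mu$ for almost every $x$ with $\rho(x)>0$; since $\phi$ is continuous (as noted in the proof of Lemma \ref{chempot}), this inequality in fact holds at every Lebesgue point of $\rho$ where $\rho(x)>0$. Hence the lemma reduces to showing
\[
\mu \,\le\, \frac{6+\alpha}{3}\,\frac{E_\alpha(m)}{m}.
\]

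To prove this bound on $\mu$, I would test against the dilated competitor $\rho_\lambda(x):=\rho(x/\lambda)$, which still satisfies $0\le \rho_\lambda\le 1$ and has mass $\lambda^3 m$. Splitting the energy into its Coulomb and attractive parts, $\mathcal E_\alpha[\rho]=A+B$, a change of variables gives
\[
\mathcal E_\alpha[\rho_\lambda] \,=\, \lambda^{5} A + \lambda^{6+\alpha} B.
\]
By minimality of $\rho$, the functions $f(\lambda):=\lambda^{5}A+\lambda^{6+\alpha}B$ and $g(\lambda):=E_\alpha(\lambda^3 m)$ satisfy $g\le f$ on $(0,\infty)$ with equality at $\lambda=1$. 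Therefore, for $\lambda>1$,
\[
\frac{f(\lambda)-f(1)}{\lambda-1} \,\ge\, \frac{g(\lambda)-g(1)}{\lambda-1}.
\]
As $\lambda\downarrow 1$ the left side tends to $f'(1)=5A+(6+\alpha)B$. For the right side, I rewrite
\[
\frac{g(\lambda)-g(1)}{\lambda-1} \,=\, \frac{E_\alpha(m')-E_\alpha(m)}{m'-m}\cdot\frac{m'-m}{\lambda-1},\qquad m':=\lambda^3 m,
\]
and observe that the second factor tends to $3m>0$. The first inequality of Lemma \ref{chempot} then bounds the $\limsup$ of the right-hand difference quotient by $3m\mu$.

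Combining these estimates and using $B\le A+B=E_\alpha(m)$, which holds because $A\ge 0$, I obtain
\[
3m\mu \,\le\, 5A+(6+\alpha)B \,=\, 5E_\alpha(m)+(1+\alpha)B \,\le\, (6+\alpha)\,E_\alpha(m),
\]
which is precisely the required inequality on $\mu$.

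The main point where I would need to be careful is the change of variables from $\lambda$ to the mass parameter $m'=\lambda^3 m$ inside the one-sided derivative estimate provided by Lemma \ref{chempot}. The factorization of the difference quotient above is the step that needs justification, and it is valid precisely because $dm'/d\lambda\big|_{\lambda=1}=3m>0$, so the direction of the inequality is preserved. Beyond this, the argument is a short scaling identity combined with a nonnegativity of the Coulomb part, so I do not anticipate any further serious obstacle.
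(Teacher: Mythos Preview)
Your reduction $\phi(x)\le\mu$ at every Lebesgue point with $\rho(x)>0$ is correct, and so is the scaling identity $\mathcal E_\alpha[\rho_\lambda]=\lambda^5 A+\lambda^{6+\alpha}B$. The gap is in the last step, where the inequalities point the wrong way. From $g\le f$ with $g(1)=f(1)$ and $\lambda>1$ you obtain
\[
\frac{g(\lambda)-g(1)}{\lambda-1}\ \le\ \frac{f(\lambda)-f(1)}{\lambda-1},
\]
hence $\limsup_{\lambda\downarrow1}$ of the $g$-quotient is at most $f'(1)$. But the first inequality of Lemma~\ref{chempot} reads $\limsup_{m'\downarrow m}(E(m')-E(m))/(m'-m)\le\mu$, which after multiplying by the positive factor $3m$ gives that the \emph{same} $\limsup$ is at most $3m\mu$. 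You now have two upper bounds, $f'(1)$ and $3m\mu$, on one quantity; this does not imply $3m\mu\le f'(1)$. Approaching from $\lambda<1$ does not help either: the comparison with $f$ then yields $\liminf_{\lambda\uparrow1}$ of the $g$-quotient $\ge f'(1)$, while the second inequality of Lemma~\ref{chempot} yields $\liminf\ge 3m\mu$, again two lower bounds on the same thing. Your argument would go through if $E_\alpha$ were known to be differentiable at $m$ (then $g'(1)=3m\mu$ and the touching condition forces $g'(1)=f'(1)$), but Lemma~\ref{chempot} does not establish this.

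The paper circumvents the chemical potential entirely by a \emph{local} perturbation: it removes $\rho$ on a small ball $B_r(x)$ and dilates the remainder by $\ell_r=(m/\!\int_{B_r(x)^c}\rho)^{1/3}$ to restore the mass, then compares energies. Expanding and sending $r\to0$ (using that $x$ is a Lebesgue point and that $(\ell_r^p-1)/|B_r|\to (p/3)\,\rho(x)/m$) gives directly
\[
\rho(x)\,\phi(x)\ \le\ \Big(\tfrac{5}{3}\,D^{(-1)}[\rho]+\tfrac{6+\alpha}{3}\,D^{(\alpha)}[\rho]\Big)\frac{\rho(x)}{m},
\]
and dividing by $\rho(x)>0$ yields the lemma.
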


\begin{proof}
For fixed $r>0$ we define $\chi_<:= \chi_{B_r(x)}$ and $\chi_>=1-\chi_<$. We consider
$$
\tilde\rho(y):= \rho(y/\ell_r) \chi_>(y/\ell_r)
\qquad\text{with}\qquad
\ell_r := \left( \frac{m}{\int \chi_> \rho\,dy} \right)^{1/3} \,,
$$
which satisfies $\int_{\R^3} \tilde\rho(y)\,dy = m$ and therefore, by optimality of $\rho$,
\begin{equation}
\label{eq:optimality}
\mathcal E_\alpha[\rho] \leq \mathcal E_\alpha[\tilde\rho] \,.
\end{equation}
With the notation
$$
D^{(\beta)}[\sigma]:= \frac{1}{2} \iint_{\R^3\times\R^3} \sigma(y)|y-y'|^\beta \sigma(y')\,dy\,dy' \,,
\qquad
\phi^{(\beta)}(y):= \int_{\R^3} |y-y'|^\beta \rho(y')\,dy' \,,
$$
we have
\begin{align*}
\mathcal E_\alpha[\tilde\rho] & = \ell_r^5 D^{(-1)}[\chi_>\rho] + \ell_r^{6+\alpha} D^{(\alpha)}[\chi_>\rho]\\
& = \ell_r^5 \left( D^{(-1)}[\rho] - \int_{B_r(x)} \rho(y)\phi^{(-1)}(y)\,dy + D^{(-1)}[\chi_<\rho] \right)\\
& \qquad + \ell_r^{6+\alpha} \left( D^{(\alpha)}[\rho] - \int_{B_r(x)} \rho(y)\phi^{(\alpha)}(y)\,dy + D^{(\alpha)}[\chi_<\rho] \right).
\end{align*}
Therefore \eqref{eq:optimality} becomes
\begin{align*}
& \ell_r^5 \int_{B_r(x)} \rho(y)\phi^{(-1)}(y)\,dy + \ell_r^{6+\alpha} \int_{B_r(x)} \rho(y)\phi^{(\alpha)}(y)\,dy \\
& \qquad \leq \left( \ell_r^5 - 1\right) D^{(-1)}[\rho] + \left(\ell_r^{6+\alpha}-1\right) D^{(\alpha)}[\rho] 
+ \ell_r^5 D^{(-1)}[\chi_<\rho] + \ell_r^{6+\alpha} D^{(\alpha)}[\chi_<\rho] \,.
\end{align*}
Since $\ell_r\geq 1$ we can bound the left side from below by
$$
\ell_r^5 \int_{B_r(x)} \rho(y)\phi^{(-1)}(y)\,dy + \ell_r^{6+\alpha} \int_{B_r(x)} \rho(y)\phi^{(\alpha)}(y)\,dy \geq 
\int_{B_r(x)} \rho(y)\phi(y)\,dy \,.
$$
On the other hand, since $\rho\leq 1$ we can bound the last two terms on the right side by
$$
D^{(-1)}[\chi_<\rho] \leq D^{(-1)}[\chi_<] = C_1 r^5 \,,
\qquad
D^{(\alpha)}[\chi_<\rho] \leq D^{(\alpha)}[\chi_<] = C_2 r^{6+\alpha} \,,
$$
for some constants $C_1$ and $C_2$. Therefore we obtain
$$
\frac{1}{|B_r(x)|}\int_{B_r(x)} \rho(y)\phi(y)\,dy \leq \frac{\ell_r^5 - 1}{|B_r(x)|} D^{(-1)}[\rho] + \frac{\ell_r^{6+\alpha}-1}{|B_r(x)|} D^{(\alpha)}[\rho] + C_1' \ell_r^5 r^2 + C_2' \ell_r^{6+\alpha} r^{3+\alpha} \,.
$$
We now want to let $r\to 0$. Since $x$ is a Lebesgue point of $\rho$ and since $\phi$ is continuous, we have
$$
\lim_{r\to 0} \frac{1}{|B_r(x)|}\int_{B_r(x)} \rho(y)\phi(y)\,dy = \rho(x)\phi(x) \,.
$$
On the other hand, we have
$$
\frac{\ell_r^3 -1}{|B_r(x)|} = \frac{1}{m - \int_{B_r(x)} \rho(y)\,dy} \ \frac{1}{|B_r(x)|} \int_{B_r(x)} \rho(y)\,dy \to \frac{\rho(x)}{m} \,.
$$
Since
$$
\frac{\ell_r^5 -1}{\ell_r^3 -1} \to \frac{5}{3}
\qquad\text{and}\qquad
\frac{\ell_r^{6+\alpha} -1}{\ell_r^3 -1} \to \frac{6+\alpha}{3} \,,
$$
we finally conclude that
$$
\rho(x)\phi(x) \leq \left( \frac 53 D^{(-1)}[\rho] + \frac{6+\alpha}{3} D^{(\alpha)}[\rho] \right) \frac{\rho(x)}{m} \,.
$$
Bounding $5\leq 6+\alpha$ and recalling that $\rho(x)\neq 0$ we obtain the lemma.
\end{proof}

\begin{proof}[Proof of Theorem \ref{diameter}]
If we choose
$$
R = \left( \frac{4 E_\alpha(m)}{m^2} \right)^{1/\alpha}
$$
and $\rho$ is a minimizer, then $m- 2\mathcal E_\alpha[\rho]/(mR^\alpha)\geq m/2$ and therefore Lemma \ref{ball} (and the continuity of $a\mapsto \int_{B_R(a)}\rho(y)\,dy$) implies that there is an $a\in\R^3$ such that
$$
\int_{B_R(a)} \rho(y)\,dy \geq m/2 \,.
$$
From this we conclude that for any $x\in\R^3$ with $|x-a|> (\sigma+1)R$, where $\sigma>0$ is a parameter to be determined later,
\begin{align*}
\phi(x) & \geq \int_{B_R(a)} |x-y|^\alpha \rho(y)\,dy \geq (|x-a|-R)^\alpha \int_{B_R(a)}\rho(y)\,dy \geq (|x-a|-R)^\alpha m/2 \\
& > \sigma^\alpha R^\alpha m/2 = \sigma^\alpha 2 E_\alpha(m)/m \,.
\end{align*}
We choose $\sigma= (1+\alpha/6)^{1/\alpha}$, so that $2 \sigma^\alpha = (6+\alpha)/3$, and then Lemma \ref{upperbound} implies that $\rho(x)=0$ whenever $|x-a|>(\sigma+1)R$. Thus,
$$
\diam\supp\rho \leq 2(\sigma+1) R = 2((1+\alpha/6)^{1/\alpha}+1) (4E_\alpha(m)/m^2)^{1/\alpha} \,.
$$
Finally, by computing with trial functions we obtain
$$
E_\alpha(m) \leq \mathcal E_\alpha[m \chi_{B_{(3/(4\pi))^{1/3}}}] = C_1 m^2
\qquad\text{if}\ m\leq 1 \,,
$$
(the radius has not been optimized) and
$$
E_\alpha(m)\leq \mathcal E_\alpha[\chi_{B_{(3m/(4\pi))^{1/3}}}] = C_2 m^{5/3} + C_3 m^{2+\alpha/3} 
\qquad\text{if}\ m>0 \,,
$$
which implies that $E_\alpha(m)/m^2 \leq C_4 \max\{1, m^{\alpha/3}\}$. Inserting this into the diameter bound, we obtain the theorem.
\end{proof}

The following consequence of Theorem \ref{diameter} will be used in the proof of Theorems \ref{main1} and \ref{main}.

\begin{corollary}\label{diamcor}
Let $\rho$ be a minimizer for $E_\alpha(m)$ and $x\in\supp\rho$. Then, if $\alpha\leq 2$,
$$
\min\left\{m, m^{(\alpha+1)/3}\right\}\lesssim \int_{\R^3} |x-y|^{\alpha-2}\rho(y)\,dy \lesssim m^{(\alpha+1)/3}
$$
and, if $\alpha\geq 2$,
$$
m^{(\alpha+1)/3}\lesssim \int_{\R^3} |x-y|^{\alpha-2}\rho(y)\,dy \lesssim \max\left\{m,m^{(\alpha+1)/3}\right\}
$$
with implicit constants depending only on $\alpha$.
\end{corollary}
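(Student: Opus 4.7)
My plan is to prove the four inequalities (two upper, two lower, in each case split by whether $\alpha\leq 2$ or $\alpha\geq 2$) separately, with each one controlled by one short argument. The substantive ingredient is the diameter bound $d:=\diam\supp\rho\lesssim\max\{1,m^{1/3}\}$ from Theorem \ref{diameter}; the remaining tools are the constraints $0\leq\rho\leq 1$ and $\int\rho=m$, and the bathtub principle.

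When $\alpha\geq 2$ the kernel $|x-y|^{\alpha-2}$ is nondecreasing in $|x-y|$, so the upper bound is immediate: on $\supp\rho$ it is dominated by $d^{\alpha-2}\lesssim\max\{1,m^{(\alpha-2)/3}\}$, and integrating against $\rho$ produces the extra factor $m$, giving $\max\{m,m^{(\alpha+1)/3}\}$. For the upper bound when $\alpha\leq 2$ the kernel blows up at $y=x$, so I would split $\R^3$ into $B_r(x)$ and its complement. On the ball I use $\rho\leq 1$ to get $\int_{B_r(x)}|x-y|^{\alpha-2}\,dy\lesssim r^{\alpha+1}$; on the exterior I use $|x-y|^{\alpha-2}\leq r^{\alpha-2}$ (valid since $\alpha-2\leq 0$) together with $\int\rho=m$ to get $r^{\alpha-2}m$. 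Balancing at $r\sim m^{1/3}$ yields the claimed $m^{(\alpha+1)/3}$.

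The lower bound when $\alpha\leq 2$ mirrors the first argument: now $|x-y|^{\alpha-2}$ is nonincreasing in $|x-y|$, so for $y\in\supp\rho$ the bound $|x-y|\leq d\lesssim\max\{1,m^{1/3}\}$ (using crucially that $x\in\supp\rho$) gives $|x-y|^{\alpha-2}\gtrsim\min\{1,m^{(\alpha-2)/3}\}$, and integrating against $\rho$ produces $\min\{m,m^{(\alpha+1)/3}\}$. The last case, the lower bound when $\alpha\geq 2$, is the one place where the diameter bound is not the right tool: the kernel vanishes at $y=x$, so concentrating $\rho$ near $x$ actually \emph{lowers} the integral. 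Here I would invoke the bathtub principle: under the constraints $0\leq\rho\leq 1$ and $\int\rho=m$, the functional $\int|x-y|^{\alpha-2}\rho(y)\,dy$ is minimized by $\rho=\chi_{B_R(x)}$ with $R=(3m/(4\pi))^{1/3}$, and a direct computation gives the minimum value $\frac{4\pi}{\alpha+1}R^{\alpha+1}=C_\alpha m^{(\alpha+1)/3}$.

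None of the four steps is difficult once the natural dichotomy at $\alpha=2$ is recognised; the only points requiring minor care are that the ball-splitting step needs $\alpha>-1$ for local integrability near $x$ (automatic since $\alpha>0$), and that the two regimes of $m$ implicit in $\max\{1,m^{1/3}\}$ dovetail correctly with $\min$ versus $\max$ in the two regimes of $\alpha$. The hardest conceptual point is simply noticing that the lower bound for $\alpha\geq 2$ requires a qualitatively different argument (bathtub rather than diameter), reflecting the fact that for $\alpha\geq 2$ the obstruction is the $L^\infty$ constraint $\rho\leq 1$, whereas for $\alpha\leq 2$ it is the finite spatial extent of $\supp\rho$.
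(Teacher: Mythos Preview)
Your proof is correct and uses the same ingredients as the paper (the diameter bound from Theorem~\ref{diameter}, the constraints $0\leq\rho\leq 1$ and $\int\rho=m$, and the bathtub principle), though the execution differs slightly. The paper applies the bathtub principle uniformly in all four cases: for the upper bound with $\alpha\leq 2$ and the lower bound with $\alpha\geq 2$ it places the mass on the ball $B_{(3m/(4\pi))^{1/3}}(x)$, while for the remaining two cases it places the mass on a spherical shell $B_{2d}(x)\setminus B_{cd}(x)$ of volume $m$ pushed against the outer boundary of the allowed support, and then computes the asymptotics of $d^{\alpha+1}(2^{\alpha+1}-c^{\alpha+1})$ in the two regimes of $m$. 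Your pointwise kernel bounds $|x-y|^{\alpha-2}\lessgtr d^{\alpha-2}$ on $\supp\rho$ replace the shell construction and are somewhat more direct, and your ball-splitting for the upper bound with $\alpha\leq 2$ is an elementary substitute for the bathtub step (choosing $r=(3m/(4\pi))^{1/3}$ recovers exactly the paper's bound). Both routes are equally short; the paper's is more unified, yours avoids the shell-volume asymptotics.
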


\begin{proof}
By the `bathtub principle' \cite[Theorem 1.14]{LL} we have, since $0\leq\rho\leq 1$ and $\int \rho\,dy = m$,
$$
\int_{\R^3} |x-y|^{\alpha-2}\rho(y)\,dy \leq \int_{B_{(3m/(4\pi))^{1/3}}(x)} |x-y|^{\alpha-2}\,dy = C m^{(\alpha+1)/3} \qquad\text{if}\ \alpha\leq 2
$$
and
$$
\int_{\R^3} |x-y|^{\alpha-2} \rho(y)\,dy \geq \int_{B_{(3m/(4\pi))^{1/3}}(x)} |x-y|^{\alpha-2}\,dy = C m^{(\alpha+1)/3} \qquad\text{if}\ \alpha\geq 2 \,.
$$
(Note that these inequalities are valid for \emph{any} $x\in\R^3$.)

We now prove the opposite inequalities. Let $d=C \max\{1,m^{1/3}\}$, where $C$ is the constant from Theorem \ref{diameter}, so that the support of $\rho$ is contained in a ball $B$ of radius $d$. We shall prove the inequalities in the corollary for every $x\in B$ and therefore, in particular, for all $x\in\supp\rho$. Note that for any $x\in B$, $\supp\rho$ is contained in a ball of radius $2d$ around $x$. Let $c:= 2(1-3m/(4\pi(2d)^3))^{1/3}$, so that the spherical shell between radii $cd$ and $2d$ has volume $m$. Then, again by the `bathtub principle', if $x\in B$,
$$
\int_{\R^3} |x-y|^{\alpha-2} \rho(y)\,dy \geq \int_{B_{2d}(x) \setminus B_{cd}(x)} |x-y|^{\alpha-2}\,dy = C' d^{\alpha+1} \left(2^{\alpha+1} - c^{\alpha+1}\right) \quad\text{if}\ \alpha<2
$$
and
$$
\int_{\R^3} |x-y|^{\alpha-2} \rho(y)\,dy \leq \int_{B_{2d}(x) \setminus B_{cd}(x)} |x-y|^{\alpha-2}\,dy = C' d^{\alpha+1} \left( 2^{\alpha+1} - c^{\alpha+1} \right) \quad\text{if}\ \alpha>2\,.
$$
To complete the proof of the corollary, we note that
$$
d^{\alpha+1} \left( 2^{\alpha+1} - c^{\alpha+1} \right) \approx m \qquad\text{if}\ m\leq 1
$$
and
$$
d^{\alpha+1} \left( 2^{\alpha+1} - c^{\alpha+1} \right) \approx m^{(\alpha+1)/3}
\qquad\text{if}\ m\geq 1
$$
where $\approx$ means that there are upper and lower bounds on the ratio with finite, positive constants depending only on $\alpha$.
\end{proof}


\section{Proof of the main results, Theorems \ref{main1} and \ref{main}}

\subsection{Outline of the proof}

Both proofs of Theorems \ref{main1} and \ref{main} rely on Proposition~\ref{sobconst}, which implies that $\Delta \phi=0$ on $\{\phi=\mu\}$. In order to apply this proposition, we need to verify that $\phi\in W^{2,1}_\loc(\R^3)$ and in the following lemma we will show that in fact $\phi\in W^{2,p}_\loc(\R^3)$ for any $p<\infty$.

We recall that given $\rho$, $\phi$ is defined by \eqref{eq:pot}.

\begin{lemma}\label{regularity}
For any $\rho\in L^\infty$ with $|x|^\alpha\rho\in L^1(\R^3)$ one has $\phi\in W^{2,p}_\loc(\R^3)$.
\end{lemma}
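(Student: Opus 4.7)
The plan is to split the kernel as $k(x) = |x|^{-1} + |x|^\alpha$ and decompose $\phi = \phi_1 + \phi_2$ accordingly, treating each piece separately. As a preliminary observation, the hypothesis $|x|^\alpha\rho\in L^1(\R^3)$ together with $\rho\in L^\infty$ implies $\rho\in L^1(\R^3)$: on $\{|y|\le 1\}$ use boundedness of $\rho$, and on $\{|y|>1\}$ use the pointwise bound $\rho(y)\le |y|^\alpha\rho(y)$.

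For the Coulomb part $\phi_1$, the first step is to exploit the distributional identity $-\Delta(1/|x|) = 4\pi\delta_0$ to conclude $-\Delta\phi_1 = 4\pi\rho$ in the sense of distributions on $\R^3$. Local integrability of $\phi_1$ itself is easy from $\rho\in L^1\cap L^\infty$ and local integrability of $|x|^{-1}$ in $\R^3$ (splitting the convolution according to $|x-y|\le 1$ or $|x-y|>1$). Since $\rho\in L^\infty\subset L^p_\loc(\R^3)$ for every $p<\infty$, the standard interior $L^p$ (Calder\'on--Zygmund) estimates for the Laplacian give $\phi_1\in W^{2,p}_\loc(\R^3)$ for every $p<\infty$.

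For the attractive part $\phi_2$, I would prove the stronger statement $\phi_2\in W^{2,\infty}_\loc(\R^3)$. The pointwise derivatives of $|x-y|^\alpha$ of order $\le 2$ are dominated by $C_\alpha |x-y|^{\alpha-2}$, and since $\alpha>-1$ they are locally integrable on $\R^3$, so they coincide with the distributional derivatives of $|x|^\alpha$. Differentiation under the integral, justified via the regularization $(|x-y|^2+\epsilon^2)^{\alpha/2}$ and dominated convergence, then yields $\partial^\beta\phi_2(x) = \int (\partial^\beta k_2)(x-y)\,\rho(y)\,dy$ for every multi-index with $|\beta|\le 2$. To bound this uniformly for $|x|\le R$, split into $|x-y|\le 1$ and $|x-y|>1$: on the near piece the integral is at most $\|\rho\|_\infty\int_{|z|\le 1}|z|^{\alpha-2}\,dz$, which is finite since $\alpha>-1$; on the far piece the inequality $|x-y|^{\alpha-2}\le |x-y|^\alpha\le 2^\alpha(R^\alpha+|y|^\alpha)$ combined with $\rho\in L^1$ and $|y|^\alpha\rho\in L^1$ provides a uniform bound depending only on $R$, $\alpha$, $\|\rho\|_\infty$, $\|\rho\|_1$ and $\||y|^\alpha\rho\|_1$.

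The main technical delicacy is the justification of differentiation under the integral sign for $\phi_2$ in the regime $0<\alpha<2$, where the second-derivative kernel $|x-y|^{\alpha-2}$ blows up on the diagonal; this is handled cleanly by the above mollification together with dominated convergence (using the locally integrable dominating function $C_\alpha|x-y|^{\alpha-2}$ near the diagonal and polynomial growth control far away). Combining the two bounds gives $\phi = \phi_1+\phi_2\in W^{2,p}_\loc(\R^3)$ for every $p<\infty$, as required.
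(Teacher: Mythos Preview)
Your proof is correct and follows essentially the same route as the paper: split $\phi=\phi_{-1}+\phi_\alpha$, apply Calder\'on--Zygmund to $-\Delta\phi_{-1}=4\pi\rho$ using $\rho\in L^\infty\subset L^p_{\loc}$, and bound $\partial_i\partial_j\phi_\alpha$ pointwise via $|\partial_i\partial_j|x|^\alpha|\le C|x|^{\alpha-2}$ together with $\rho\in L^\infty$ and $|x|^\alpha\rho\in L^1$. Your version is in fact a bit more careful than the paper's in that you spell out why $\rho\in L^1$ and justify differentiation under the integral; the paper additionally offers an elementary Fourier argument for the weaker $W^{2,2}_{\loc}$ statement for $\phi_{-1}$, which you can omit since you already obtain $W^{2,p}_{\loc}$ for all $p<\infty$.
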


\begin{proof}
It is easy to see that $\phi$ is a continuous function, so it is enough to show that $\partial_i\partial_j\phi\in L^p_\loc(\R^3)$ for any $i,j$. We decompose $\phi = \phi_{-1}+\phi_\alpha$ as in the proof of Lemma \ref{upperbound}. We have $-\Delta\phi_{-1} = 4\pi\rho$ in the sense of distributions. Since $\rho\in L^p(\R^3)$ for any $p<\infty$, we deduce from the Calderon--Zygmund inequality (see, e.g., \cite[Thm. 9.9]{GT}) that $\partial_i\partial_j\phi_{-1}\in L^p(\R^3)$.

In fact, let us give an elementary proof of the weaker fact that $\partial_i\partial_j\phi_{-1}\in L^2(\R^3)$ (which, however, is sufficient for our application of Proposition \ref{sobconst}).  Since $\rho\in L^2(\R^3)$ and since the Fourier transform of $e^{-|x|}/|x|$ is a constant times $1/(1+p^2)$, we deduce that $e^{-|x|}/|x| * \rho\in H^2(\R^3)$. Moreover, since $|\partial_i\partial_j (1-e^{-|x|})/|x|| \lesssim |x|^{-2}(1+|x|)^{-1}$ and $\rho\in L^p(\R^3)$ for $3<p<\infty$, we conclude that $\partial_i\partial_j (1-e^{-|x|})/|x|*\rho\in L^\infty$.

For the $\phi_\alpha$ piece we use the fact that $|\partial_i\partial_j\phi|\leq C |x|^{\alpha-2}$. Using $\rho\in L^\infty$ and $|x|^\alpha\rho\in L^1$ we again deduce that $\partial_i\partial_j\phi_\alpha\in L^\infty$.
\end{proof}


\subsection{The regime of small mass -- Proof of Theorem \ref{main1}}

Let us consider the minimization problem
$$
E^*_\alpha := \inf\left\{ \mathcal E[\rho]:\ \rho\geq 0\,,\ \int_{\R^3}\rho(x)\,dx = 1 \right\} \,.
$$
We know from \cite{CCP,SST} that a minimizer exists and from \cite{CDM} that any minimizer is bounded, so
$$
M^*(\alpha) := \sup\left\{ \|\rho\|_\infty^{-1} :\ \rho \ \text{minimizer for}\ E^*_\alpha \right\} >0\,.
$$
The following simple fact is essentially contained in \cite[Proof of Thm. 1.1(i)]{BCT}, but for the sake of completeness we provide a proof.

\begin{lemma}\label{energysmall}
If $m\leq M^*(\alpha)$, then $E_\alpha(m) = m^2 E_\alpha^*$.
\end{lemma}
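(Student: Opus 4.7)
My plan is to prove the two inequalities separately; both rely only on the homogeneity $\mathcal E_\alpha[c\rho]=c^2\,\mathcal E_\alpha[\rho]$.

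First, the lower bound $E_\alpha(m)\geq m^2\,E_\alpha^*$, which in fact holds for every $m>0$ without the hypothesis on $m$. Given any $\rho$ admissible for $E_\alpha(m)$, the rescaled density $\tilde\rho := \rho/m$ is nonnegative and has total mass one. Crucially, the problem $E_\alpha^*$ imposes \emph{no} pointwise upper bound, so $\tilde\rho$ is automatically admissible for $E_\alpha^*$ regardless of how large $1/m$ is. By homogeneity, $\mathcal E_\alpha[\rho]=m^2\,\mathcal E_\alpha[\tilde\rho]\geq m^2\,E_\alpha^*$, and taking the infimum over $\rho$ gives the inequality.

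For the reverse inequality $E_\alpha(m)\leq m^2\,E_\alpha^*$, the obvious candidate is $m\rho^*$ where $\rho^*$ is a minimizer of $E_\alpha^*$: this has the right mass and right energy, and lies in $[0,1]$ precisely when $m\leq\|\rho^*\|_\infty^{-1}$. When $m<M^*(\alpha)$, the definition of $M^*(\alpha)$ as a supremum allows us to choose a minimizer $\rho^*$ of $E_\alpha^*$ with $\|\rho^*\|_\infty^{-1}>m$, i.e.\ $\|\rho^*\|_\infty<1/m$. Then $m\rho^*$ is admissible for $E_\alpha(m)$, and
\[
E_\alpha(m)\leq \mathcal E_\alpha[m\rho^*]=m^2\,\mathcal E_\alpha[\rho^*]=m^2\,E_\alpha^*.
\]

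The only remaining case is the endpoint $m=M^*(\alpha)$ (when this number is finite), where the supremum defining $M^*(\alpha)$ may fail to be attained and the direct construction above does not quite apply. Here I would pass to the limit $m'\uparrow M^*(\alpha)$ in the identity $E_\alpha(m')=(m')^2 E_\alpha^*$ just proved, using the continuity of $m\mapsto E_\alpha(m)$ noted after Lemma~\ref{chempot}. This endpoint is the only mildly delicate point of the argument; all other steps are the elementary scaling identity and the definition of $M^*(\alpha)$.
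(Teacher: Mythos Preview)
Your proof is correct and follows essentially the same route as the paper: both inequalities are obtained by the homogeneity rescaling $\rho\mapsto\rho/m$ and $\rho^*\mapsto m\rho^*$, with the endpoint $m=M^*(\alpha)$ handled by continuity. The only cosmetic difference is that the paper phrases the choice of minimizer as $\|\rho_*\|_\infty^{-1}\geq m$ rather than your strict inequality, but for $m<M^*(\alpha)$ these are equivalent.
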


\begin{proof}
We begin by proving $\geq$ for \emph{any} $m>0$. For any $\rho$ with $0\leq\rho\leq1$ and $\int_{\R^3}\rho\,dx =m$ we can take $\rho/m$ as a trial state for the $E_\alpha^*$ problem and obtain, by homogeneity,
$$
E_\alpha^* \leq \mathcal E_\alpha[\rho/m] = \mathcal E_\alpha[\rho]/m^2 \,.
$$
Taking the infimum over all such $\rho$ we obtain $E_\alpha^* \leq E_\alpha(m)/m^2$, which is $\geq$ in the lemma.

For the converse inequality, let $m<M^*(\alpha)$ and choose a minimizer $\rho_*$ for $E^*_\alpha$ with $\|\rho_*\|_\infty^{-1} \geq m$. Then $m\rho_*$ is an admissible trial state for the $E_\alpha(m)$ problem and we obtain, again by homogeneity,
$$
E_\alpha(m) \leq \mathcal E_\alpha[m\rho^*] = m^2 \mathcal E_\alpha[\rho_*] = m^2 E_\alpha^* \,,
$$
which is $\leq$ in the lemma. The equality extends to $m=M^*(\alpha)$ by continuity.
\end{proof}

\begin{proof}[Proof of Theorem \ref{main1}]
Let $m\leq M^*(\alpha)$ and let $\rho$ be a minimizer for $E_\alpha(m)$. We infer from Lemma \ref{energysmall} that $E_\alpha'(m)=2m E_\alpha^*$ (for $m=M^*(\alpha)$ this holds for the left derivative) and therefore, by Lemma \ref{chempot}, that $\mu = 2mE_\alpha^*$. Thus,
\begin{align*}
\mu m & = 2m^2 E_\alpha^* = 2 E_\alpha(m) = \int_{\R^3} \phi(x)\rho(x)\,dx \\
& = \mu \int_{\R^3}\rho(x)\,dx + \int_{\R^3} (\phi(x)-\mu) \rho(x)\,dx \\
& = \mu m + \int_{\R^3} (\phi(x)-\mu) \rho(x)\,dx \,,
\end{align*}
that is,
$$
\int_{\R^3} (\phi(x)-\mu) \rho(x)\,dx = 0 \,.
$$
According to the Euler equation \eqref{eq:eeq}, $\phi\leq\mu$ a.e. on $\{\rho>0\}$. Therefore we conclude that
$$
\phi = \mu \qquad \text{a.e. on}\ \{\rho >0\} \,.
$$
By Proposition \ref{sobconst} and Lemma \ref{regularity} we deduce that
\begin{equation}
\label{eq:shappl1}
-\Delta\phi = 0 \qquad \text{a.e. on}\ \{\rho >0\} \,.
\end{equation}

On the other hand, by an explicit calculation, we have
\begin{equation}
\label{eq:eeqlap}
-\Delta\phi(x) = 4\pi \rho(x) -\alpha(\alpha+1) \int_{\R^3} |x-y|^{\alpha-2}\rho(y)\,dy \,.
\end{equation}
Using Corollary \ref{diamcor} we can bound for almost every $x\in\R^3$ with $\rho(x)=1$
\begin{equation}
\label{eq:diamappl1}
4\pi \rho(x) -\alpha(\alpha+1) \int_{\R^3} |x-y|^{\alpha-2}\rho(y)\,dy \geq 4\pi - C \max\{m,m^{(\alpha+1)/3}\} \,.
\end{equation}
For $\alpha\leq 2$ the maximum can be replaced by $m^{(\alpha+1)/3}$. In any case, if $|\{\rho=1\}|>0$, we learn from \eqref{eq:shappl1}, \eqref{eq:eeqlap} and \eqref{eq:diamappl1} that $4\pi \leq C \max\{m,m^{(\alpha+1)/3}\}$, that is, $m\geq \max\{4\pi/C, (4\pi/C)^{3/(\alpha+1)}\}$ (and $m\geq (4\pi/C)^{3/(\alpha+1)}$ if $\alpha\leq 2$). This proves the theorem.
\end{proof}


\subsection{The regime of large mass -- Proof of Theorem \ref{main}}

According to the Euler equation \eqref{eq:eeq}, $\phi=\mu$ a.e. on $\{0<\rho<1\}$ and so again by Proposition \ref{sobconst} and Lemma~\ref{regularity} we deduce that
\begin{equation}
\label{eq:shappl2}
-\Delta\phi = 0 \qquad \text{a.e. on}\ \{0<\rho <1\} \,.
\end{equation}

As in the previous proof we will compare this with the formula for $-\Delta\phi$ from \eqref{eq:eeqlap}. Using Corollary \ref{diamcor} we can bound for almost every $x\in\R^3$ with $0<\rho(x)<1$
\begin{equation}
\label{eq:diamappl2}
4\pi \rho(x) -\alpha(\alpha+1) \int_{\R^3} |x-y|^{\alpha-2}\rho(y)\,dy \leq 4\pi - C \min\{m,m^{(\alpha+1)/3}\} \,.
\end{equation}
For $\alpha\geq 2$ the minimum can be replaced by $m^{(\alpha+1)/3}$. In any case, if $|\{0<\rho<1\}|>0$, we deduce from \eqref{eq:shappl2}, \eqref{eq:eeqlap} and \eqref{eq:diamappl2} that $4\pi \geq C \min\{m,m^{(\alpha+1)/3}\}$, that is, $m\leq \max\{4\pi/C, (4\pi/C)^{3/(\alpha+1)}\}$ (and $m\geq (4\pi/C)^{3/(\alpha+1)}$ if $\alpha\geq 2$). This proves the theorem.\qed


\subsection{Extension to more general kernels}\label{sec:extensions}

In this subsection we consider the minimization problem \eqref{eq:energygen} with the energy function \eqref{eq:energyfcnlgen} involving a general kernel $k$ satisfying \eqref{eq:kernelgen}.

\begin{proposition}
Assume that the distributional Laplacian $\Delta k$ is bounded from below by a locally integrable function and satisfies $\liminf_{|x|\to\infty} \Delta k >0$. Then there is an $m_*<\infty$ such that for $m>m_*$ any minimizer $\rho$ for $E(m)$ satisfies $|\{\rho=1\}|=m$.
\end{proposition}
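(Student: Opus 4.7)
My plan is to parallel the proof of Theorem \ref{main}, with the distributional Laplacian $\Delta k$ playing the role that $-4\pi\delta_0 + \alpha(\alpha+1)|x|^{\alpha-2}$ played there. The Euler--Lagrange equation \eqref{eq:eeq} and the compactness of $\supp\rho$ remain valid for general kernels $k$ satisfying \eqref{eq:kernelgen}, by \cite{BCT}. The first preparatory task is an analog of Lemma \ref{regularity}, namely $\phi\in W^{2,p}_\loc(\R^3)$ for some $p>1$, so that Proposition \ref{sobconst} can be applied. Write $\Delta k = g+h$, where $g\in L^1_\loc(\R^3)$ is the given locally integrable lower bound and $h:=\Delta k - g$ is a non-negative distribution, hence a locally finite non-negative Borel measure. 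Since $\rho$ is bounded with compact support, both $g*\rho$ and $h*\rho$ are locally bounded; combined with the continuity of $\phi$ and the Calder\'on--Zygmund inequality, this gives $\phi \in W^{2,p}_\loc(\R^3)$ for every $p<\infty$.

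Next, Proposition \ref{sobconst} applied with $A=\{\mu\}$ yields $\Delta\phi=0$ almost everywhere on $\{\phi=\mu\}$, which by \eqref{eq:eeq} contains $\{0<\rho<1\}$ up to a null set. At the same time, since $\Delta k\geq g$ distributionally and $\rho\geq 0$, one has $(\Delta k)*\rho \geq g*\rho$ pointwise, so
$$
-\Delta\phi(x)\leq -(g*\rho)(x).
$$
The hypothesis $\liminf_{|x|\to\infty}\Delta k>0$ permits us to choose $g$ so that, additionally, $g\geq c$ almost everywhere on $\{|x|>R\}$ for some $R,c>0$ (e.g.\ by replacing the given $g$ with $\max(g,c\chi_{\R^3\setminus B_R})$). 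Using $0\leq\rho\leq 1$ and $\int\rho\,dx=m$, we then estimate
$$
(g*\rho)(x) \geq c\int_{\{|x-y|>R\}}\rho(y)\,dy - \|g\|_{L^1(B_R)} \geq c(m-|B_R|) - \|g\|_{L^1(B_R)} =: cm - C,
$$
with $C$ independent of $m$. Hence $-\Delta\phi \leq -(cm - C)$, which is strictly negative once $m>m_*:=C/c$. This contradicts $\Delta\phi=0$ almost everywhere on $\{0<\rho<1\}$ unless $|\{0<\rho<1\}|=0$, and combined with $\rho\leq 1$ and $\int\rho\,dx=m$ this forces $|\{\rho=1\}|=m$, as claimed.

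The main obstacle I expect is the careful handling of the distributional hypotheses on $\Delta k$: the precise interpretation of $\liminf_{|x|\to\infty}\Delta k>0$ so that the decomposition $\Delta k = g+h$ can be arranged with $g\geq c$ outside a bounded ball, and justifying the Sobolev regularity of $\phi$ when $h$ has a nontrivial singular part. The remaining steps are straightforward adaptations of the proof of Theorem \ref{main}, but cleaner in this setting, because the hypothesis $\liminf_{|x|\to\infty}\Delta k>0$ already encodes the growth at infinity that was extracted there via Corollary \ref{diamcor}; in particular, no quantitative diameter bound is needed.
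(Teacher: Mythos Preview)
Your proposal is correct and follows essentially the same route as the paper: reduce to showing $\Delta\phi>0$ for large $m$ via the Euler--Lagrange equation and Proposition~\ref{sobconst}, then bound $\Delta\phi$ from below using the locally integrable minorant $\kappa$ (your $g$) and the mass constraint. The paper's proof is more terse---it simply writes ``argue as in the proof of Theorem~\ref{main}'' and proceeds directly to the estimate $\Delta\phi(x)\geq -\int_{B_R}\kappa_-\,dz + c(m-|B_R|)$---whereas you spell out the regularity step $\phi\in W^{2,p}_{\loc}$ explicitly via the decomposition $\Delta k=g+h$ and the local boundedness of $g*\rho$ and $h*\rho$. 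Your self-identified ``main obstacle'' is in fact already resolved by your own argument: once $\rho$ is bounded with compact support, the convolution of the non-negative measure $h$ with $\rho$ is locally bounded regardless of any singular part, so $\Delta\phi\in L^\infty_{\loc}$ and elliptic regularity applies.
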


\begin{proof}
We argue as in the proof of Theorem \ref{main} and therefore it suffices to show that $\Delta\phi>0$ if $m$ is large enough. By assumption there is a $\kappa\in L^1_{\rm loc}(\R^3)$ and constants $c>0$ and $R>0$ such that $\Delta k\geq \kappa$ and $\kappa(x)\geq c$ if $|x|\geq R$. Thus,
\begin{align*}
\Delta\phi(x) & \geq \int_{\R^3} \kappa(x-y)\rho(y)\,dy \geq - \int_{\{|x-y|<R\}} \kappa_-(x-y)\rho(y)\,dy + c \int_{\{|x-y|\geq R\}} \rho(y)\,dy \\
& \geq - \int_{\{|z|<R\}} \kappa_-(z)\,dz + c \left( m - (4\pi/3) R^3 \right).
\end{align*} 
The right side if clearly positive if $m$ is large enough.
\end{proof}

This proposition applies, for instance, to $k(x) = |x|^{-\beta}+|x|^\alpha$ with $0<\beta<1$ and $\alpha\geq 2$. The same result holds also for $0<\alpha<2$, with a proof that is parallel to that of Theorem \ref{main} and, as in that case, one needs in addition a bound on the diameter of the support of $\rho$. Such bound can be obtained by the same way as in Section \ref{sec:diameter} and we omit the details.


\appendix

\section{A theorem about subharmonic functions. I}

In this appendix we complement Corollary \ref{sh} by a similar result under different assumptions, which is interesting in its own right. While Corollary \ref{sh} concerns Sobolev functions and has a relatively straightforward proof, we now discuss the case of continuous functions without any integrability assumptions on derivatives. The proof is technically significantly more difficult and we are greatly indebted to Luis Silvestre for showing us how to extend the result from $C^{1,1}$ functions to continuous functions. A further generalization will be discussed in the following appendix.

We recall that, if $u$ is a continuous, real function on an open set $\Omega$, we say that $-\Delta u \leq f$ in $\Omega$ in \emph{viscosity sense} if for any $x\in\Omega$ and any $\phi\in C^2(\Omega)$ for which $u-\phi$ has a local maximum at $x$ one has $-\Delta\phi(x)\leq f(x)$.

\begin{proposition}\label{shcont}
Let $\Omega\subset\R^d$, $d\geq 1$, be an open set and assume that $u\in C(\Omega)$ satisfies $-\Delta u\leq -\epsilon$ in $\Omega$ in viscosity sense for some $\epsilon>0$. Then $|\{x\in\Omega:\ u(x)=\tau\}|=0$ for any $\tau\in\R$.
\end{proposition}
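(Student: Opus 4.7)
The plan is to regularize $u$ by a sup-convolution, apply Alexandrov's theorem to this semiconvex approximant, and then run a density-point argument analogous to the one underlying Proposition~\ref{sobconst}.

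For $\lambda>0$ I would introduce the sup-convolution
$$
u^\lambda(x) := \sup_{y\in\Omega}\Bigl\{u(y)-\tfrac{|x-y|^{2}}{2\lambda}\Bigr\},
$$
defined on a slightly shrunk subdomain $\Omega_\lambda\Subset\Omega$. Standard viscosity-solutions theory (Crandall--Ishii--Lions, Caffarelli--Cabr\'e) yields that $u^\lambda+|x|^{2}/(2\lambda)$ is a supremum of affine functions of $x$ and hence convex, so $u^\lambda$ is semiconvex and locally Lipschitz; that $u^\lambda\to u$ locally uniformly as $\lambda\to 0^{+}$; and that $u^\lambda$ is again a viscosity subsolution of $-\Delta u^\lambda\le -\epsilon$ on $\Omega_\lambda$. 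By Alexandrov's theorem, $u^\lambda$ is twice pointwise differentiable at almost every point, and at any such point one may use the (slightly opened) second-order Taylor polynomial of $u^\lambda$ as a $C^{2}$ test function in the viscosity inequality to obtain the pointwise bound
$$
\operatorname{tr}\bigl(\operatorname{Hess}^{A}u^\lambda(x)\bigr)\ge \epsilon \qquad\text{for a.e. } x\in\Omega_\lambda.
$$

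The next step is to establish the no-flat-spot property for $u^\lambda$: $|\{u^\lambda=c\}|=0$ for every $c\in\R$. If instead $|\{u^\lambda=c\}|>0$, pick $x^{*}$ that is simultaneously a Lebesgue density point of this level set and a point of Alexandrov twice differentiability. The second-order expansion
$$
u^\lambda(x^{*}+h)-u^\lambda(x^{*}) = \langle \nabla u^\lambda(x^{*}),h\rangle + \tfrac{1}{2}\langle\operatorname{Hess}^{A}u^\lambda(x^{*})h,h\rangle + o(|h|^{2})
$$
vanishes for $h$ in a set of Lebesgue density~$1$ at the origin, and the argument that drives the $k=2$ case of Proposition~\ref{sobconst} (testing along narrow cones of directions) then forces first $\nabla u^\lambda(x^{*})=0$ and then $\operatorname{Hess}^{A}u^\lambda(x^{*})=0$, contradicting the above pointwise lower bound.

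The final step, and the one I expect to be the main obstacle, is to transfer the no-flat-spot property from $u^\lambda$ back to $u$, since uniform convergence $u^\lambda\to u$ does not by itself propagate the absence of flat spots. A natural route is to argue by contradiction: assuming $|\{u=\tau\}|>0$, exhibit a positive-measure subset of Lebesgue density points $x_{0}$ of $\{u=\tau\}$ at which the supremum defining $u^\lambda(x_{0})$ is attained at $y=x_{0}$ for all sufficiently small $\lambda$, giving $u^\lambda(x_{0})=\tau$ and hence a level set $\{u^\lambda=\tau\}$ of positive measure, in contradiction with the previous step. The delicate point is to verify that $y=x_{0}$ is indeed a global maximizer---equivalently, that $u$ is dominated near $x_{0}$ by the paraboloid $\tau + |\cdot-x_{0}|^{2}/(2\lambda)$---which amounts to extracting quadratic-growth information at density points from the strict viscosity subharmonicity, for instance by combining the continuity of $u$ with the sub-mean-value inequality
$$
\frac{1}{|B_{r}(x_{0})|}\int_{B_{r}(x_{0})}u(y)\,dy\;\ge\;\tau + \frac{\epsilon\,r^{2}}{2(d+2)}
$$
(which follows from the fact that $u-\frac{\epsilon}{2d}|x-x_{0}|^{2}$ is viscosity subharmonic and hence classically subharmonic). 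Carrying out this quadratic domination at a positive-measure set of density points is the technical heart of the proof.
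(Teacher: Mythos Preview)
Your steps 1--3 are sound: the sup-convolution $u^\lambda$ is semiconvex, Alexandrov applies, the viscosity subsolution property persists, and the density-point argument kills flat spots of $u^\lambda$. The gap is exactly where you flag it, in step~4, and the route you propose does not close it. The sub-mean-value inequality you write down gives a \emph{lower} bound on spherical (or ball) averages of $u$ near $x_0$; combined with the density-point hypothesis $u=\tau$ on most of $B_r(x_0)$, it forces the small set $\{u\neq\tau\}\cap B_r(x_0)$ to carry large positive values of $u-\tau$. That is information in the wrong direction: what you need for ``the supremum in $u^\lambda(x_0)$ is attained at $y=x_0$'' is the \emph{upper} bound $u(y)\le \tau + |y-x_0|^2/(2\lambda)$, i.e.\ a touching paraboloid from above at $x_0$. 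Strict subharmonicity and continuity alone do not furnish such a quadratic upper bound at any particular point, let alone at a positive-measure set of density points.

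The existence of touching paraboloids from above at almost every point is precisely the deep input the paper uses: Lemma~\ref{cc} (from Caffarelli--Cabr\'e) says that $|\{\overline{\Theta}(u,B')>t\}\cap B|\to 0$ as $t\to\infty$, which is exactly the statement that $u$ admits an upper paraboloid of some finite opening at a.e.\ point. The paper then picks a Lebesgue point of $\{u=0\}\cap\{\overline{\Theta}\le t\}$, uses the paraboloid to get $\int_{B_r}u\le (t/2)r^2|\{u\neq 0\}\cap B_r|$, uses strict subharmonicity (your sub-mean-value computation) to get $\int_{B_r}u\ge c_d'\epsilon r^2|B_r|$, and reads off a uniform positive lower bound on $|\{u\neq 0\}\cap B_r|/|B_r|$, contradicting the density-point property. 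Note that once you have the paraboloid bound at $x_0$, the sup-convolution machinery becomes redundant: the paper's two-sided estimate on $\int_{B_r}u$ finishes the argument directly. So your approach, if completed correctly, would have to invoke the same Caffarelli--Cabr\'e ingredient and would then collapse to the paper's proof.
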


This proposition has an elementary proof under the additional assumption $u\in C^{1,1}(\Omega)$, but remarkably it also holds without this assumption. We also note that the statement is wrong if we only assume $-\Delta u\leq 0$ in $\Omega$, as the example $u(x) =\max\{x_1,0\}$ shows.

For the proof we define for any set $\Omega\subset\R^d$ and any $u\in C(\Omega)$
\begin{align*}
\overline{\Theta}(u,\Omega)(x):= & \inf\left\{ A\geq 0:\ \text{there is a}\ p\in\R^d\ \text{such that for all}\ y\in\Omega, \right. \\
&  \qquad \qquad\qquad \left. u(y) \leq u(x) +p\cdot(y-x) + (A/2)|y-x|^2 \right\}
\end{align*}
with the convention that $\inf\emptyset=+\infty$. We shall use the following deep result \cite[Ch. 7]{CC} (see also \cite[Prop. 3.1]{ASS}).

\begin{lemma}\label{cc}
Let $B\subset\R^d$ be a ball and $B'$ a concentric ball with twice the radius. If $u\in C(B')$ satisfies $-\Delta u \leq 0$ in $B'$ in viscosity sense, then
$$
\lim_{t\to\infty} \left|\{ x\in B:\ \overline{\Theta}(u,B')(x)>t\}\right| = 0 \,.
$$ 
\end{lemma}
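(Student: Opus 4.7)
The result is a classical (and genuinely deep) theorem of Caffarelli--Cabr\'e; my plan is to follow their Chapter~7 strategy, based on the ABP maximum principle iterated through a Calder\'on--Zygmund cube decomposition. The overall goal is a quantitative weak-type estimate: I would aim to show that for some $\sigma>0$ and $C$ depending only on $d$ and $\|u\|_{L^\infty(B')}$,
\[
\bigl|\{x\in B: \overline\Theta(u,B')(x)>t\}\bigr| \le C\,t^{-\sigma} \qquad (t\ge 1),
\]
from which the lemma follows at once. The $L^\infty$ bound on $u$ is available because $B\Subset B'$ and $u$ is continuous on the closure.

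The analytical input is the Alexandroff--Bakelman--Pucci maximum principle for viscosity supersolutions of $-\Delta u\le 0$, namely: the concave envelope $\Gamma$ of $u$ on $B'$ equals $u$ on a set of positive measure, and on that contact set paraboloids of controlled opening touch $u$ from above, with an integral bound tying the supremum of $u$ to the measure of the contact set. Before invoking this in viscosity sense I would regularize $u$ by sup-convolution $u^\epsilon(x)=\sup_y\{u(y)-|x-y|^2/(2\epsilon)\}$; this produces a semi-convex function with a pointwise Hessian a.e., still satisfying $-\Delta u^\epsilon\le 0$ in viscosity sense on a slightly smaller ball, and for which $\overline\Theta(u^\epsilon,\cdot)\to\overline\Theta(u,\cdot)$ in a sense sufficient to pass to the limit at the end.

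The geometric step is the Calder\'on--Zygmund cube iteration. After normalizing, one shows at a single scale that in any dyadic cube $Q$ on which $\overline\Theta(u,B')$ exceeds a given threshold, the subcube subset where it is \emph{at most} a larger fixed multiple $M$ of that threshold occupies at least a fixed fraction $\eta\in(0,1)$ of $Q$. Iterating this one-scale improvement $k$ times yields the geometric decay
\[
\bigl|\{x\in B:\overline\Theta(u,B')(x)>M^k\}\bigr|\le (1-\eta)^k\,|B|,
\]
which gives the power estimate above with $\sigma = -\log(1-\eta)/\log M$. Finally I would send $\epsilon\to 0$ to return from $u^\epsilon$ to $u$.

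The main obstacle is precisely this Calder\'on--Zygmund iteration. Verifying the uniform density statement in each cube requires placing, inside each bad cube, an auxiliary barrier paraboloid at the right height and then applying the ABP estimate to $u$ minus that paraboloid; the ABP contact set then realizes the ``good'' subset of measure $\ge\eta|Q|$. The technical care needed in choosing the barriers, ensuring that openings rescale correctly with the dyadic scale, and treating the viscosity (rather than classical) sense of $-\Delta u\le 0$ is what makes this the $W^{2,\varepsilon}$ estimate of Caffarelli and what justifies quoting it as a ``deep result'' rather than reproving it here.
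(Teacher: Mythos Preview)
Your identification is correct: this lemma is precisely the $W^{2,\varepsilon}$ estimate of Caffarelli--Cabr\'e, and the paper does not prove it at all---it simply quotes it as a black box from \cite[Ch.~7]{CC} (and \cite[Prop.~3.1]{ASS}). Your outline of the ABP-plus-Calder\'on--Zygmund iteration is the standard route to that estimate and is accurate in its broad strokes, including the quantitative power decay $|\{\overline\Theta>t\}|\lesssim t^{-\sigma}$, which the paper mentions but does not use. So there is no discrepancy in approach to report; both you and the paper agree that this is a deep external input best cited rather than reproved.
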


In fact, there are bounds on how fast the measure of the set in the lemma tends to zero, but they are not important for us.

\begin{proof}
Replacing $u$ by $u-\tau$ we may assume that $\tau=0$. We will show that for every ball $B$ such that $\overline{B'}\subset\Omega$ (where $B'$ denotes the concentric ball with twice the radius) one has $|\{u =0\}\cap B|=0$. This will clearly imply the result.

We argue by contradiction and assume that $|\{u =0\}\cap B|>0$. According to Lemma~\ref{cc} we can choose $t$ so large that $|\{\overline{\Theta}(u,B')>t\}\cap B|<|\{u=0\}\cap B|$. This implies that $\{\overline{\Theta}(u,B')\leq t\}\cap \{u=0\}\cap B$ has positive measure and we choose $x$ to be a Lebesgue point of this set and assume, after a translation, that $x=0$. Thus, we have $u(0) =0$ and there is a $p\in\R^d$ such that
$$
u(y) \leq p\cdot y + (t/2)|y|^2
\qquad\text{for all}\ y\in B' \,.
$$
Let $B_r=\{y: |y|<r\}$. Since $0$ is a Lebesgue point of $\{u=0\}\cap B$, we have $|B_r|^{-1} |\{u=0\}\cap B_r|\to 1$ as $r\to 0$ and therefore $p=0$. (Indeed, otherwise $u(y)$ would be negative in the cone $\{ y\in\R^d:\ p\cdot y\leq -\delta|y|\,,\ |y| <2\delta/t\}$ where $\delta<|p|$ is a fixed constant.) Therefore, we can bound for every $r>0$ such that $B_r\subset B'$
\begin{align}
\label{eq:parabolabd}
\int_{B_r} u(y)\,dy & \leq \int_{\{u\neq 0\}\cap B_r\}} u(y)\,dy  \leq (t/2) r^2 \left| \{u\neq 0\}\cap B_r \right|.
\end{align}

On the other hand, let us derive a lower bound on the left side. Using the Green's function for the ball we find that for any $C^2$ function $v$ on $B_\rho$ one has
$$
v(0) = - c_d \int_{B_\rho} \left(\frac{1}{|y|^{d-2}} - \frac{1}{\rho^{d-2}} \right)\Delta v(y)\,dy
+ \frac{(d-2)c_d}{\rho^{d-1}} \int_{|y|=\rho} v(y)\,d\sigma(y) \,.
$$
with $c_d = ((d-2)|\Sph^{d-1}|)^{-1}$. (The formula for $d=1,2$ is similar and is omitted.) Multiplying by $\rho^{d-1}$ and integrating with respect to $\rho$, we obtain
$$
|B_r| v(0) = -\frac1{d-2} \int_{B_r} \left( \frac1d\frac{r^d-|y|^d}{|y|^{d-2}} - \frac12 \left(r^2-|y|^2\right) \right)\Delta v(y)\,dy + \int_{B_r} v(y)\,dy \,. 
$$
We apply this inequality to $v=\eta_\delta* u$, where $\eta_\delta(y) = \delta^{-d}\eta(y/\delta)$ with some mollifier $\eta\geq 0$ and note that $-\Delta v\leq -\epsilon$. Using this inequality for the first term on the right side and then letting $\delta\to 0$, we obtain, since $u(0)=0$,
$$
0 \leq -\frac{\epsilon}{d-2} \int_{B_r} \left( \frac1d\frac{r^d-|y|^d}{|y|^{d-2}} - \frac12 \left(r^2-|y|^2\right) \right) dy + \int_{B_r} u(y)\,dy \,,
$$
that is,
\begin{equation}
\label{eq:sshbound}
\int_{B_r} u(y)\,dy \geq \epsilon c_d' r^2 |B_r|
\end{equation}
with a constant $c_d'>0$ depending only on $d$.

Comparing \eqref{eq:parabolabd} and \eqref{eq:sshbound}, we find that for every $r>0$ such that $B_r\subset B'$,
$$
\frac{|\{u\neq 0\}\cap B_r|}{|B_r|} \geq \frac{2c_d'\epsilon}{t}
$$
This contradicts the fact that $0$ is a Lebesgue point of the set $\{u=0\}\cap B$. This proves the theorem.
\end{proof}


\section{A theorem about subharmonic functions. II}

In this appendix we generalize Corollary \ref{sh} to general subharmonic functions. We learned the argument from Mikhail Sodin, to whom we are grateful.

\subsection{Statement of the result and outline of the proof}

If $u$ is a subharmonic function on an open set $\Omega\subset\R^d$, then the distribution $\Delta u$ is non-negative and therefore extends to a non-negative, locally finite, regular Borel measure on $\Omega$, which we denote by $\mu_u$. (Here, `regular' means that $\mu_u(A)=\inf\{\mu(O):\ O\supset A \ \text{open}\}$ and $\mu_u(A) = \sup\{\mu(K):\ K\subset A\ \text{compact} \}$ for every measurable set $A\subset\Omega$. Moreover, for us `measurable' always means Borel measurable.)

\begin{theorem}\label{mainsh}
Let $\Omega\subset\R^d$ be open and $u\in L^1_{\loc}(\Omega)$ subharmonic such that $\mu_u(E)>0$ for any measurable $E\subset\Omega$ with $|E|>0$. Then $|\{u=\tau\}|=0$ for any $\tau\in\R$.
\end{theorem}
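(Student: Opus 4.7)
My plan is to approximate $u$ by smooth subharmonic functions, apply Corollary~\ref{sh} to each approximation, and then pass to the limit.

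First, the hypothesis immediately gives $\supp(\mu_u) = \Omega$: if some non-empty open $U\subset\Omega$ had $\mu_u(U)=0$, the hypothesis would force $|U|=0$, a contradiction. Consequently, the mollifications $u_\epsilon := u * \rho_\epsilon$ (by any smooth non-negative radial mollifier $\rho_\epsilon$ supported in $B_\epsilon$) are smooth and subharmonic on $\Omega_\epsilon := \{x\in\Omega : \dist(x,\Omega^c) > \epsilon\}$, and their Laplacian $\Delta u_\epsilon = \mu_u * \rho_\epsilon$ is smooth and strictly positive there. On any open $U\Subset \Omega_\epsilon$ this smooth positive function is uniformly bounded away from $0$, so Corollary~\ref{sh} applies to $u_\epsilon$ on $U$ and yields $|\{x\in U : u_\epsilon(x)=t\}|=0$ for every $t\in\R$; exhausting $\Omega_\epsilon$ by such $U$'s gives $|\{u_\epsilon=t\}|=0$ for every $t$ and every $\epsilon>0$ small enough.

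Next, for the upper semi-continuous representative of $u$, the classical monotonicity of spherical averages for subharmonic functions gives $u_\epsilon \downarrow u$ pointwise as $\epsilon\downarrow 0$. Writing $u_\epsilon(x)=\int_0^\epsilon \tilde\rho_\epsilon(r)\,A_r(u)(x)\,dr$ and using that the spherical average $A_r(u)(x)\geq u(x)$ with equality if and only if $u\equiv u(x)$ almost everywhere on $\partial B_r(x)$, one verifies
$$
\bigcup_{\epsilon>0}\{u_\epsilon\leq \tau\} \;=\; \{u<\tau\}\cup \mathrm{int}^{\mathrm{ess}}\{u=\tau\},
$$
where $\mathrm{int}^{\mathrm{ess}}\{u=\tau\}:=\{x:\,u=\tau\ \text{a.e.\ on some }B_\epsilon(x)\}$. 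Combining this with the elementary identity $|\{u_\epsilon\leq\tau\}\cap K| + |\{u_\epsilon\geq\tau\}\cap K| = |K|$ (valid because $|\{u_\epsilon=\tau\}|=0$) and the monotone convergence of the two left-hand terms, one obtains $|\mathrm{int}^{\mathrm{ess}}\{u=\tau\}\cap K|=0$ for every compact $K\subset\Omega$, and hence $|\mathrm{int}^{\mathrm{ess}}\{u=\tau\}|=0$.

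The main obstacle is to upgrade this to the full claim $|\{u=\tau\}|=0$, that is, to show that $\{u=\tau\}$ agrees with its essential interior up to a Lebesgue-null set. The subtle point is that a Lebesgue-density-$1$ point $x_0$ of $\{u=\tau\}$ need not lie in $\mathrm{int}^{\mathrm{ess}}\{u=\tau\}$: imagine a sequence of small ``bumps'' $x_n\to x_0$ with $u(x_n)>\tau$, consistent with upper semi-continuity. I expect that the hypothesis Lebesgue $\ll\mu_u$ rules this out: such bumps would force $\{u>\tau\}$ to carry $\mu_u$-mass of a specific quantitative strength near $x_0$ which, combined with the submean-value identity for $u$ and the lower bound $\mu_u(B_r(x_0))\gtrsim r^d$ coming from the hypothesis, is inconsistent with $u(x_0)=\tau$ and upper semi-continuity. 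Equivalently, in the language of potential theory, every Lebesgue-density-$1$ point of $\{u=\tau\}$ should be a fine interior point, so that $u$ is finely constant in a fine neighborhood of $x_0$ and $\mu_u$ must vanish there, directly contradicting the hypothesis.
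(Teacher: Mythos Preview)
Your proposal has a genuine gap, and in fact the mollification route recovers essentially nothing beyond what is immediate from the hypothesis.

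First, the claim that equality in $A_r(u)(x)\geq u(x)$ holds ``if and only if $u\equiv u(x)$ almost everywhere on $\partial B_r(x)$'' is false: for subharmonic $u$, equality $A_r(u)(x)=u(x)$ forces $A_s(u)(x)=u(x)$ for all $s\leq r$ and hence $\mu_u(B_r(x))=0$, i.e.\ $u$ is \emph{harmonic} on $B_r(x)$, not constant (take $u(y)=y_1$). With this correction, the set $\{u=\tau\}\cap\bigcup_{\epsilon}\{u_\epsilon\leq\tau\}$ is not $\mathrm{int}^{\mathrm{ess}}\{u=\tau\}$ but rather the set of points where $u(x)=\tau$ and $u$ is harmonic in some neighborhood of $x$. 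Your measure identity then only yields that this latter set is Lebesgue null. But under the hypothesis this set is in fact \emph{empty}: any non-empty open $U$ on which $u$ is harmonic would satisfy $\mu_u(U)=0$ with $|U|>0$, contradicting the assumption. So the whole mollification detour through Corollary~\ref{sh}, once corrected, proves nothing new.

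Thus the ``upgrade'' you flag as the main obstacle is the \emph{entire} content of the theorem, and your final paragraph only gestures at it. The paper's argument does not go through Corollary~\ref{sh} at all. It rests on two independent ingredients: Proposition~\ref{inf}, an Eremenko--Sodin type statement that for any non-negative subharmonic $v$ the Riesz measure $\mu_v$ assigns zero mass to the set of Lebesgue density-$1$ points of $\{v=0\}$; and Grishin's lemma (Proposition~\ref{grishin}), which says that if $v\geq u$ are both subharmonic then $(\mu_v-\mu_u)|_{\{u=v>-\infty\}}\geq 0$. One applies the first to $v=(u-\tau)_+$, then the second on $\{u=\tau\}$ to transfer the conclusion from $\mu_v$ to $\mu_u$; the hypothesis and Lebesgue's differentiation theorem finish. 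The real work is Proposition~\ref{inf}, proved by a Harnack-type iteration that controls $\sup_{|y-x|=r_k}v$ along a geometric sequence of radii and shows $\mu_v(B_r(x))/|B_r|\to 0$ at every density-$1$ point of $\{v=0\}$. This is precisely the quantitative estimate your last paragraph anticipates but does not supply.
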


The proof of this theorem hinges on the following two results. The first one extends an argument of Eremenko--Sodin \cite{ErSo} (see also their references to earlier work by \O ksendal) to arbitrary dimensions.

\begin{proposition}\label{inf}
Let $\Omega\subset\R^d$ be open and $u\in L^1_{\loc}(\Omega)$ be subharmonic and non-negative. Then
$$
\mu_u \left( \left\{ x\in\Omega:\ \lim_{r\to 0} \frac{|\{ y\in B_r(x):\ u(y)=0\}|}{|B_r|} = 1 \right\} \right) = 0 \,.
$$
\end{proposition}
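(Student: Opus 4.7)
My plan is a proof by contradiction. Suppose $\mu_u(E)>0$, where $E$ denotes the set appearing in the statement. The strategy is to pick a suitable $x_0\in E$ and derive incompatible upper and lower bounds on the solid mean
$$
A(R,x_0):=\frac{1}{|B_R|}\int_{B_R(x_0)}u(y)\,dy.
$$
I will work throughout with the upper semi-continuous representative of $u$, for which $A(R,x_0)\to u(x_0)$ as $R\to 0$ and $u(x_0)\in[0,\infty)$.

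The key ingredient is the classical mean-value identity
$$
A(R,x_0) = u(x_0) + \frac{1}{R^d\sigma_d}\int_0^R \mu_u(B_t(x_0))\left(\frac{R^d}{t^{d-1}}-t\right)dt,\qquad \sigma_d:=|\partial B_1|,
$$
obtained by integrating $r^{d-1}dr$ against the spherical-mean formula $M(r,x_0)-u(x_0)=\sigma_d^{-1}\int_0^r \mu_u(B_t(x_0))t^{-(d-1)}dt$. The point $x_0$ will be selected via Besicovitch--Lebesgue differentiation of Radon measures: at $\mu_u$-a.e.\ point $x$ the lower density $\liminf_{r\to 0}\mu_u(B_r(x))/|B_r|$ is strictly positive (being $+\infty$ on the singular part of $\mu_u$ and equal to the Radon--Nikodym derivative on the absolutely continuous part). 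Since $\mu_u(E)>0$, I can therefore fix $x_0\in E$ together with constants $c,r_0>0$ such that $\mu_u(B_r(x_0))\geq c|B_r|$ for all $r<r_0$.

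For the \emph{upper bound} I use the sub-mean-value property on concentric balls: for $y\in B_R(x_0)$ one has $u(y)\leq 2^d A(2R,x_0)$. Combined with the density hypothesis $f(R):=|B_R(x_0)\cap\{u>0\}|/|B_R|\to 0$ (from $x_0\in E$) and $u\geq 0$, this yields
$$
A(R,x_0)\leq 2^d A(2R,x_0)\,f(R).
$$
Passing to $R\to 0$ and using boundedness of $A(2R,x_0)$ forces $u(x_0)=0$. Substituting $u(x_0)=0$ along with $\mu_u(B_t(x_0))\geq c|B_t|$ into the mean-value identity produces the \emph{lower bound} $A(R,x_0)\geq c_1 R^2$ for $R<r_0$ and some $c_1>0$ depending only on $c$ and $d$ (the elementary integral $\int_0^R t^d(R^d/t^{d-1}-t)\,dt$ is proportional to $R^{d+2}$).

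The two bounds are incompatible. Setting $g(R):=A(R,x_0)/R^2$, we have $g(R)\geq c_1$ for $R<r_0$, yet $g(R)\leq 2^{d+2}f(R)\,g(2R)$. Choose $R_1>0$ so small that $2^{d+2}f(R)\leq 1/2$ on $(0,R_1)$; iterating gives $g(R_1\,2^{-n})\leq 2^{-n}g(R_1)\to 0$, contradicting $g\geq c_1>0$. Hence $\mu_u(E)=0$. I expect the only delicate point to be verifying the mean-value identity and the pointwise convergence $A(R,x_0)\to u(x_0)$ in the stated $L^1_{\loc}$ generality (with subharmonicity only in the distributional sense); both are classical after passing to the upper semi-continuous representative, but need to be set up carefully since $\mu_u$ may be singular.
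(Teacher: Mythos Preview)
Your argument is correct and is built on the same underlying mechanism as the paper's proof---an iterative decay estimate for averages of $u$ on shrinking scales, driven by the small density of $\{u>0\}$---but the implementation is genuinely different and in several respects more economical.

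The paper proves, for \emph{every} $x\in E$, that $\liminf_{r\to 0}\mu_u(B_r(x))/|B_r|=0$, and then invokes its Besicovitch-based Lemma~\ref{posmeas} to conclude $\mu_u|_E\le 0$. To obtain that pointwise statement it works with the supremum $M_k=\sup_{\partial B_{r_k}(x)}u$ on spheres, uses the Poisson kernel bound \eqref{eq:green1} to show $M_{k+1}\le C\eta M_k$ where $\eta$ controls the \emph{spherical} density of $\{u\ne 0\}$, and must first locate radii $r_k$ on which that spherical density is small.

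You instead select a single $x_0\in E$ with $\mu_u(B_r(x_0))\ge c|B_r|$ for all small $r$ (this is legitimate: by the Besicovitch differentiation theorem applied to Lebesgue measure against $\mu_u$, the limit $\lim_{r\to 0}|B_r|/\mu_u(B_r(x))$ exists and is finite for $\mu_u$-a.e.\ $x$), and then run a decay iteration directly on the ball average $g(R)=A(R,x_0)/R^2$ using the sub-mean-value inequality and the \emph{solid} density $f(R)$. No Poisson kernel, no sphere-selection step; the dyadic iteration $g(R)\le 2^{d+2}f(R)\,g(2R)$ is immediate. The mean-value identity then supplies the contradictory lower bound $g\ge c_1$.

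Both routes ultimately rest on a Besicovitch covering argument (Lemma~\ref{posmeas} in the paper, the differentiation theorem in your approach). Your proof is shorter; the paper's yields the marginally stronger intermediate fact that the lower $\mu_u$-density vanishes at \emph{every} point of $E$, and both give a quantitative $\epsilon_d$ (yours with $\epsilon_d=2^{-(d+3)}$). The caveats you flag---the mean-value identity and $A(R,x_0)\to u(x_0)$ for the upper semi-continuous representative of an $L^1_{\loc}$ subharmonic function---are indeed classical and hold in the generality needed.
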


In other words, $\Delta u$ vanishes on the set of Lebesgue points of $\{u=0\}$. In fact, our proof shows that there is an $\epsilon_d>0$, depending only on $d$, such that
$$
\mu_u \left( \left\{ x\in\Omega:\ \liminf_{r\to 0} \frac{|\{ y\in B_r(x):\ u(y)=0\}|}{|B_r|} > 1-\epsilon_d \right\} \right) = 0 \,.
$$

The second ingredient in our proof of Theorem \ref{mainsh} is a special case of Grishin's lemma, see \cite{Gr} and also \cite{So} and references therein.

\begin{proposition}\label{grishin}
Let $\Omega\subset\R^d$ be open and let $u,v\in L^1_{\loc}(\Omega)$ be subharmonic with $v\geq u$. Then
$$
\left(\mu_v-\mu_u\right)|_{\{u=v>-\infty\}} \geq 0 \,. 
$$
\end{proposition}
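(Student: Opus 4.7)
The plan is to convert the pointwise equality $u=v$ on $E:=\{u=v>-\infty\}$ into a weighted integral inequality between $\mu_u$ and $\mu_v$ by using the Riesz representation of subharmonic functions on small balls, and then to extract the measure-theoretic inequality by Lebesgue--Besicovitch differentiation applied to the Radon measure $\nu:=\mu_u+\mu_v$.

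First, by inner regularity of Borel measures, it suffices to prove $(\mu_v-\mu_u)(A)\geq 0$ for every compact $A\subset E$ contained in a ball $\overline{B_0}\subset\Omega$. Next, for any $x_0\in\Omega$ and $r>0$ with $\overline{B_r(x_0)}\subset\Omega$, the Riesz decomposition of a subharmonic $w\in L^1_{\loc}$ with $w(x_0)>-\infty$ reads
$$
w(x_0)=\frac{1}{|\partial B_r|}\int_{\partial B_r(x_0)}w\,d\sigma-\int_{B_r(x_0)}G_r(x_0,y)\,d\mu_w(y),
$$
where $G_r(x_0,\cdot)\geq 0$ is the Dirichlet Green's function for $-\Delta$ on $B_r(x_0)$ with pole at the center. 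Applying this to $w=u$ and $w=v$, subtracting, and using $u(x_0)=v(x_0)$ on $E$ together with the pointwise bound $v-u\geq 0$, I obtain, for every $x_0\in E$ and every sufficiently small $r>0$,
$$
\int_{B_r(x_0)}G_r(x_0,y)\,d(\mu_v-\mu_u)(y)\geq 0.
$$
Since the super-level sets of $G_r(x_0,\cdot)$ are concentric balls around $x_0$, the layer-cake formula rewrites this as
$$
\int_0^r (\mu_v-\mu_u)\bigl(B_s(x_0)\bigr)\,d\lambda_r(s)\geq 0,
$$
where $\lambda_r$ is an explicit positive measure on $(0,r)$ (for instance $d\lambda_r=c_d(d-2)s^{1-d}\,ds$ when $d\geq 3$, with analogous logarithmic or linear weights for $d=2,1$); in all dimensions $\lambda_r$ assigns positive mass to every neighborhood of $s=0$.

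To finish, the Lebesgue--Besicovitch differentiation theorem asserts that for $\nu$-a.e.\ $x_0$ the density $\rho_v(x_0):=\lim_{s\downarrow 0}\mu_v(B_s(x_0))/\nu(B_s(x_0))$ exists, with $\rho_u(x_0)=1-\rho_v(x_0)$. If the conclusion of the proposition fails, then the set $F:=\{x_0\in E:\rho_v(x_0)<1/2\}$ has positive $\nu$-measure, and I may pick $x_0\in F\cap\supp\nu$ where the Besicovitch limits hold. For such $x_0$ there exist $\delta>0$ and $s_0>0$ with $(\mu_v-\mu_u)(B_s(x_0))<-2\delta\,\nu(B_s(x_0))$ for all $0<s<s_0$. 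Since $\nu(B_s(x_0))>0$ for every $s>0$, the weighted integral is strictly negative as soon as $r<s_0$, contradicting the previous display.

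The main obstacle is the last step, in which a pointwise density statement must be extracted from a family of weighted integral inequalities. This is resolved by exploiting the singularity of $G_r$ at its pole $x_0$: it concentrates the weight $\lambda_r$ near $s=0$, so that the sign of $\int_0^r(\mu_v-\mu_u)(B_s(x_0))\,d\lambda_r(s)$ is determined by the behavior of the integrand as $s\to 0$, precisely where Besicovitch differentiation provides information.
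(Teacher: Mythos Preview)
Your argument is correct, and it shares the same starting point as the paper's proof: the Riesz representation at the center of a ball yields, for every $x_0\in E=\{u=v>-\infty\}$ and every small $r$,
\[
\int_0^r (\mu_v-\mu_u)(B_s(x_0))\,\frac{ds}{s^{d-1}}\ \geq\ 0,
\]
which is exactly the paper's identity \eqref{eq:green2} combined with $v\geq u$.

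The difference lies in how the measure-theoretic conclusion is extracted from this family of weighted integral inequalities. The paper avoids densities entirely: since the weighted integral is non-negative, the integrand $s\mapsto(\mu_v-\mu_u)(B_s(x_0))$ must be $\geq 0$ for some $s\in(0,r)$; iterating with $r$ replaced by $s/2$ produces a sequence $r_n\downarrow 0$ with $(\mu_v-\mu_u)(B_{r_n}(x_0))\geq 0$, and then Lemma~\ref{posmeas} (a direct Besicovitch covering argument) finishes. You instead introduce the reference measure $\nu=\mu_u+\mu_v$, invoke Besicovitch differentiation to obtain densities $\rho_u,\rho_v$, and derive a contradiction at a point of $E$ where $\rho_v<1/2$. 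Both routes ultimately rest on a Besicovitch-type result; the paper's iteration is slightly more elementary in that it never needs to speak of densities or pass to $\nu$, while your approach makes the connection with the Radon--Nikodym structure of $\mu_v-\mu_u$ more explicit.
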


Assuming these two propositions we now show how they imply Theorem \ref{mainsh}.

\begin{proof}
[Proof of Theorem \ref{mainsh}]
Replacing $u$ by $u-\tau$, we may assume that $\tau=0$. The function $v:=u_+$ is subharmonic and, setting
$$
E=\left\{ x\in\Omega:\ \lim_{r\to 0} \frac{|\{ y\in B_r(x):\ u(y)\leq 0\}|}{|B_r|} = 1 \right\},
$$
we deduce from Proposition \ref{inf} that $\mu_v(E)=0$. Therefore, if
$$
E'= \left\{ x\in\Omega:\ \lim_{r\to 0} \frac{|\{ y\in B_r(x):\ u(y)= 0\}|}{|B_r|} = 1 \right\} \,,
$$
then $E'\subset E$ and therefore also $\mu_v(E')=0$. On the other hand, since $v\geq u$, Proposition \ref{grishin} implies that 
$$
\left( \mu_v - \mu_u \right)|_{\{u\geq 0\}} \geq 0 \,.
$$
Therefore, if
$$
E''=\{ x\in\Omega:\ u(x)=0\} \,,
$$
then $\mu_u(E'\cap E'') \leq \mu_v(E'\cap E'')\leq \mu_v(E') = 0$, so $\mu_u(E'\cap E'')=0$. The assumed strict subharmonicity of $u$ therefore implies that $|E'\cap E''|=0$. But, by Lebesgues's theorem, $|E''\setminus E'|=0$ and therefore $|E''|=0$, as claimed.
\end{proof}

\subsection{Tools for the proof of the propositions}

Thus, it remains to prove Propositions \ref{inf} and \ref{grishin}. Their proofs rely on two ingredients. The first one is a variant of Green's formula and the second one a general result about measures. For the first result we recall that the Green's function for the unit ball $B\subset\R^d$ is given by
$$
G(x,y) = 
\begin{cases}
\frac{1}{(d-2)|\Sph^{d-1}|} \left( \frac{1}{|x-y|^{d-2}} - \frac{1}{(1-2x\cdot y + x^2 y^2)^{(d-2)/2}} \right) & \text{if}\ d\geq 3 \,,\\
\frac{1}{2\pi} \left( \ln \frac{1}{|x-y|} - \ln \frac{1}{(1-2x\cdot y + x^2 y^2)^{1/2}} \right) & \text{if}\ d= 2 \,.
\end{cases}
$$
Therefore, if $u$ is, say, $C^2$ in the unit ball and continuous up to the boundary, then we have Green's representation formula
$$
u(x) = - \int_B G(x,y)\Delta u(y)\,dy - \int_{\Sph^{d-1}} \frac{\partial G}{\partial\nu_y}(x,y) u(y)\,d\sigma(y) \,.
$$
One consequence of this formula is that, if $u$ is subharmonic,
\begin{equation}
\label{eq:green1}
u(x) \leq - \int_{\partial B} \frac{\partial G}{\partial\nu_y}(x,y) u(y)\,d\sigma(y) = \frac{1}{|\Sph^{d-1}|} \int_{\Sph^{d-1}} \frac{1-x^2}{(1-2x\cdot y + x^2)^{d/2}} u(y)\,d\sigma(y) \,.
\end{equation}
Using a simple density argument, based for instance on \cite[Thm. 9.3]{LL}, this inequality extends to any (not necessarily smooth) subharmonic function in the unit ball.

Next, we assume again that $u$ is, say, $C^2$ in $B$ and continuous on $\overline B$ and take $x=0$ in Green's representation formula. For $d\geq 3$ we obtain
$$
u(0) = - \frac{1}{(d-2)|\Sph^{d-1}|} \int_B \left( \frac{1}{|y|^{d-2}} - 1 \right) \Delta u(y)\,dy +
\frac{1}{|\Sph^{d-1}|} \int_{\Sph^{d-1}} u(y)\,d\sigma(y) \,.
$$
We note that for any $\alpha>0$,
$$
\alpha^{-1} \left( |y|^{-\alpha} - 1\right) = \alpha^{-1} \int_0^\infty \chi_{\{t<|y|^{-\alpha}-1\}} \,dt = \int_0^1 \chi_{\{|y|<\rho\}} \frac{d\rho}{\rho^{\alpha+1}} \,.
$$
Thus,
\begin{equation}
\label{eq:green2}
u(0) = - \frac{1}{|\Sph^{d-1}|} \int_0^1 \mu_u(B_\rho) \frac{d\rho}{\rho^{d-1}} + \frac{1}{|\Sph^{d-1}|} \int_{\Sph^{d-1}} u(y)\,d\sigma(y) \,.
\end{equation}
This formula is also true if $d=2$. Moreover, by a density argument, based for instance on \cite[Thm. 9.3]{LL}, it extends to any function $u$ which is subharmonic in a neighborhood of the unit ball.

The second ingredient in the proof of the propositions is the following measure theoretic result.

\begin{lemma}\label{posmeas}
Let $\mu$ be a signed, real, locally finite, regular Borel measure on $\R^d$ and let $E\subset\R^d$ be a measurable set such that for any $x\in E$,
$$
\limsup_{r\to 0} \frac{\mu(B_r(x))}{|B_r|}\geq 0 \,.
$$
Then $\mu|_E\geq 0$.
\end{lemma}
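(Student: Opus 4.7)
My plan is to prove the equivalent assertion $\mu^-(E)=0$, where $\mu=\mu^+-\mu^-$ is the Jordan decomposition of $\mu$. Since $\mu^+\perp\mu^-$, this indeed gives $\mu(A)=\mu^+(A)\geq 0$ for every Borel $A\subset E$, hence $\mu|_E\geq 0$.

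\emph{Reductions.} Let $N$ be a Hahn set carrying $\mu^-$. The subset $E\cap N\subset E$ still satisfies the hypothesis, so after replacing $E$ by $E\cap N$ I may assume $\mu^+(E)=0$. Moreover, by countable subadditivity of $\mu^-$, it is enough to show $\mu^-(E\cap B)=0$ for each ball $B$, so I may further assume $E$ is bounded, and in particular $|E|<\infty$.

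\emph{Covering step.} Fix $\epsilon,\eta>0$. By outer regularity of the Radon measure $\mu^++\lambda$ (where $\lambda$ denotes Lebesgue measure), I would choose an open $U\supset E$ with $\mu^+(U)<\eta$ and $|U|<|E|+\eta$. The limsup hypothesis, combined with the openness of $U$, exhibits
\[
\mathcal{F} := \bigl\{\overline{B_r(x)} : x\in E,\ \overline{B_r(x)}\subset U,\ \mu(\overline{B_r(x)}) > -\epsilon|B_r(x)|\bigr\}
\]
as a fine (Vitali) cover of $E$. The Besicovitch covering theorem, applied to the positive Radon measure $\mu^-$, then extracts a countable pairwise disjoint subfamily $\{B_i\}\subset\mathcal{F}$ with $\mu^-(E\setminus\bigcup_i B_i)=0$.

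\emph{Conclusion.} Each defining inequality rearranges to $\mu^-(B_i)\leq\mu^+(B_i)+\epsilon|B_i|$, so by disjointness and $B_i\subset U$,
\[
\mu^-(E) \leq \sum_i \mu^-(B_i) \leq \sum_i \mu^+(B_i) + \epsilon\sum_i |B_i| \leq \mu^+(U) + \epsilon|U| < \eta + \epsilon(|E|+\eta).
\]
Sending $\eta\downarrow 0$ and then $\epsilon\downarrow 0$ yields $\mu^-(E)=0$.

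\emph{Main obstacle.} The delicate point is engineering the Vitali cover so that every selected ball lies inside an open neighborhood of $E$ on which $\mu^+$ has already been made small; this is what allows the unsigned contribution $\sum\mu^+(B_i)$ to be absorbed into the outer-regularity budget $\eta$, leaving only the term $\epsilon|U|$ that can be sent to $0$ independently. The Besicovitch covering theorem (rather than the classical Vitali one) is needed here because the Radon measure $\mu^-$ is a priori not doubling.
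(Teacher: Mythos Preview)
Your proof is correct and follows essentially the same route as the paper's: both arguments apply the Besicovitch covering theorem to the fine cover of $E$ by balls on which $\mu(B)>-\epsilon|B|$, extract a disjoint subfamily covering $E$ up to $|\mu|$-null (resp.\ $\mu^-$-null) sets, and sum. The only cosmetic differences are that the paper works directly with $\mu(A)$ for $A\subset E$ bounded and controls $\sum_j|B_j|$ by fixing the radii $\leq 1$, whereas you pass through the Jordan decomposition and instead control $\sum_i|B_i|$ and $\sum_i\mu^+(B_i)$ via the auxiliary open set $U$; both devices serve the same purpose of making the $\epsilon$-term harmless.
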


\begin{proof}[Proof of Lemma \ref{posmeas}]
Let $A\subset E$ be measurable. We show that $\mu(A)\geq 0$. We may assume that $A$ is bounded. (Otherwise we choose a tiling of $\R^d$ by half-open, disjoint cubes $Q_j$ and consider $A\cap Q_j$ for each fixed $j$.)

Let $\epsilon>0$. By assumption, for any $x\in A$ there is a monotone decreasing sequence $(r_n(x))_{n\in\N} \subset(0,1]$, tending to zero, such that $\mu(B_{r_n(x)}(x))\geq -\epsilon|B_{r_n(x)}|$ for all $n$. We apply \cite[Thm. 2.8]{Ma}, which is a consequence of the Besicovich covering theorem, to the family $\{ \overline{B_{r_n(x)}(x)}:\ x\in A\,, n\in\N\}$ and obtain a countable subfamily of disjoint balls $B_j$ such that $|\mu|(A\setminus\bigcup_j B_j)=0$. We write
$$
\mu(A) = \mu(\bigcup B_j) + \mu(A\setminus \bigcup_j B_j)
$$
and bound
$$
\mu(\bigcup B_j) = \sum_j \mu(B_j) \geq -\epsilon \sum_j |B_j| = - \epsilon |\bigcup_j B_j| \geq -\epsilon C\,,
$$
where $C= |\{ x\in\R^d:\ \dist(x,A)\leq 1\}|$, which is finite since $A$ is bounded. On the other hand,
$$
\left| \mu(A\setminus \bigcup_j B_j) \right| \leq |\mu|(A\setminus \bigcup_j B_j) = 0 \,,
$$
so $\mu(A)\geq -\epsilon C$. Since $\epsilon>0$ is arbitrary, we conclude that $\mu(A)\geq 0$, as claimed.
\end{proof}

\subsection{Proof of the propositions}

With these tools at hand we can now prove Propositions \ref{inf} and \ref{grishin} and thereby complete the proof of Theorem \ref{mainsh}.

\begin{proof}[Proof of Proposition \ref{inf}]
Let $\epsilon_d=(1-2^{-d})/(2^{d+2}\, 3)$ and
$$
E := \left\{ x\in\Omega:\ \liminf_{r\to 0} \frac{|\{ y\in B_r(x):\ u(y)=0\}|}{|B_r|} > 1-\epsilon_d \right\}.
$$
We shall show that for any $x\in E$,
$$
\liminf_{r\to0} \frac{\mu_u(B_r)}{|B_r|} = 0 \,.
$$
This fact, together with Lemma \ref{posmeas} (applied to $\mu=-\mu_u$), yields that $\mu_u|_E\leq 0$. On the other hand, by subharmonicity $\mu_u\geq 0$, which implies that $\mu_u(E)=0$. (In fact, since one knows $\mu_u\geq 0$, the proof of Lemma \ref{posmeas} can be somewhat abbreviated.)

Fix $x\in E$ and $\epsilon\in (1- \liminf_{r\to 0} |\{ y\in B_r(x):\ u(y)=0\}|/|B_r|,\epsilon_d)$. We choose $r_0>0$ such that $\dist(x,\Omega^c)< r_0$, $M_0:= \sup_{|y-x|=r_0} u(x)<\infty$ and
$$
|\{ y\in B_r(x) : u(y)\neq 0 \}| \leq \epsilon |B_r|
\qquad\text{for all}\ 0<r\leq r_0 \,. 
$$
Moreover, let
$$
\theta(r)= \mathcal H^{d-1}(\{\omega\in\Sph^{d-1}:\ u(x+r\omega) \neq 0\})/|\Sph^{d-1}| \,.
$$
We claim that for any $0<r\leq r_0$ there is an $r'\in(r/2,r)$ such that $\theta(r') \leq \epsilon/(1-2^{-d})=:\eta$. In fact, if we had $\theta(s)>\eta$ for all $s\in (r/2,r)$, we had
\begin{align*}
\epsilon |B_r| & = \eta |\Sph^{d-1}| \int_{r/2}^r s^{d-1}\,ds < |\Sph^{d-1}| \int_{r/2}^r \theta(s) s^{d-1}\,ds \leq |\Sph^{d-1}| \int_0^r \theta(s) s^{d-1}\,ds \\
& = |\{ y\in B_r(x):\ u(y)\neq 0\}| \leq \epsilon |B_r| \,,
\end{align*}
a contradiction.

Applying this claim iteratively we obtain a sequence $(r_k)$ with $1/4\leq r_{k+1}/r_k\leq 1/2$ and $\theta(r_k)\leq\eta$. Let $M_k:= \sup_{|y-x|=r_k} u(x)$. Green's representation formula \eqref{eq:green2} together with the fact that $u(x)\geq 0$ implies that
$$
\int_0^{r_k} \mu_u(B_\rho(x)) \frac{d\rho}{\rho^{d-1}} \leq \int_{\Sph^{d-1}} u(x+r_k\omega)\,d\sigma(\omega) \,.
$$
Bounding the right side from above and the left side from below, we obtain
$$
\frac{2^{d-2}-1}{d-2} \mu_u(B_{r_k/2}(x)) r_k^{-d+2} = \mu_u(B_{r_k/2}(x)) \int_{r_k/2}^{r_k} \frac{d\rho}{\rho^{d-1}} \leq |\Sph^{d-1}| M_k \,.
$$
(If $d=2$, the constant $(2^{d-2}-1)/(d-2)$ is replaced by $\ln 2$.) This is the same as
$$
\frac{\mu_u(B_{r_k/2}(x))}{|B_{r_k/2}|} \leq \frac{d-2}{2^{d-2}-1}
 d \, 2^d \frac{M_k}{r_k^2} \,.
$$
Thus,
$$
\liminf_{r\to0} \frac{\mu_r(B_r)}{|B_r|} \leq \frac{d-2}{2^{d-2}-1}
 d \, 2^d \liminf_{k\to\infty} \frac{M_k}{r_k^2} \,,
$$
and the proposition will follow if we can show that $M_k/r_k^2\to 0$ as $k\to\infty$.

In order to show this, we observe that, after rescaling, inequality \eqref{eq:green1} implies that for any $y$ with $|y-x|=r_{k+1}$,
$$
u(y) \leq \frac{1}{|\Sph^{d-1}|} \int_{\Sph^{d-1}} \frac{r_k^{d-2} \left(r_k^2-r_{k+1}^2\right)}{(r_k^2 - 2r_k (y-x)\cdot\omega + r_{k+1}^2)^{d/2}} u(x+r_k\omega) \,d\sigma(\omega) \,.
$$ 
Thus, recalling that $u\geq 0$,
$$
u(y) \leq \frac{r_k^{d-2}(r_k^2-r_{k+1}^2)}{(r_k-r_{k+1})^d} \frac{1}{|\Sph^{d-1}|} \int_{\Sph^{d-1}} u(x+r_k\omega) \,d\sigma(\omega) \,.
$$
We bound
$$
\frac{1}{|\Sph^{d-1}|} \int_{\Sph^{d-1}} u(x+r_k\omega) \,d\omega \leq M_k \theta(r_k) \leq M_k \eta \,.
$$
Moreover, since $r_{k+1}/r_k\leq 1/2$ we have
$$
\frac{r_k^{d-2}(r_k^2-r_{k+1}^2)}{(r_k-r_{k+1})^d} = \frac{r_k^{d-2}(r_k+r_{k+1})}{(r_k-r_{k+1})^{d-1}} \leq \frac{3/2}{(1/2)^{d-1}} = 2^{d-2} \,3 \,. 
$$
Since this bound is valid for any $y$ with $|y-x|=r_{k+1}$ we conclude that
$$
M_{k+1} \leq 2^{d-2}\, 3 \eta M_k \,.
$$
Since $r_{k+1}/r_k \geq 1/4$, this implies that
$$
\frac{M_{k+1}}{r_{k+1}^2} \leq 2^{d-2}\, 3 \eta \frac{r_k^2}{r_{k+1}^2} \frac{M_k}{r_k^2} \leq 2^{d+2}\, 3\eta \frac{M_k}{r_k^2} \,.
$$
Iterating this, we obtain
$$
\frac{M_{k+1}}{r_{k+1}^2} \leq \left( 2^{d+2}\, 3\eta \right)^{k+1} \frac{M_0}{r_0^2} \,.
$$
Since
$$
2^{d+2}\,3\eta = \frac{2^{d+2}\,3\epsilon}{1-2^{-d}} < \frac{2^{d+2}\,3\epsilon_d}{1-2^{-d}} = 1\,,
$$
we infer that $M_{k+1}/r_{k+1}^2 \to 0$ as $k\to\infty$, which concludes the proof.
\end{proof}

\begin{proof}[Proof of Proposition \ref{grishin}]
Let $x\in\Omega$ such that $u(x)=v(x)>-\infty$. We shall show that there is a sequence $(r_n)\subset(0,\infty)$, tending to zero, such that $(\mu_v-\mu_u)(B_{r_n}(x))\geq 0$. According to Lemma \ref{posmeas} this will imply that $\mu_v-\mu_u\geq 0$ on $\{u=v>-\infty\}$.

For any $R<\dist(x,\Omega^c)$, we have according to \eqref{eq:green2} and rescaling,
$$
\int_0^R \left( \mu_v(B_\rho(x))-\mu_u(B_\rho(x))\right) \frac{d\rho}{\rho^{d-1}} = \int_{\Sph^{d-1}} \left(v(x+R\omega)-u(x+R\omega)\right)d\sigma(\omega) \,.
$$
Since the right side is non-negative by assumption, there is an $0<r_0<\dist(x,\Omega^c)$ such that $\mu_v(B_{r_0}(x))-\mu_u(B_{r_0}(x))\geq 0$. We now use the formula with $R$ replaced by $r_0/2$. Again, the right side is non-negative and therefore there is an $0<r_1<r_0/2$ such that $\mu_v(B_{r_1}(x))-\mu_u(B_{r_1}(x))\geq 0$. Continuing in this way we obtain a sequence with the claimed properties. This concludes the proof of the proposition.
\end{proof}


\bibliographystyle{amsalpha}

\end{document}